\numberwithin{equation}{section}
\newtheorem{theorem}{Theorem}[section]
\newtheorem{lemma}[theorem]{Lemma}
\newtheorem{proposition}[theorem]{Proposition}
\newtheorem{definition}[theorem]{Definition}
\newenvironment{proof-sketch}{\noindent{\bf Sketch of Proof}\hspace*{1em}}{\qed\bigskip}
\newcommand{\RR}{\mathbb{R}}
\newcommand{\intom}{\int_\Omega}
\newcommand{\di}{\displaystyle}
\newcommand{\ep}{\varepsilon}
\newcommand{\ee}{\mathcal E}
\newcommand{\nn}{\mathcal N}
\newcommand{\ri}{\rightarrow}
\newcommand{\sob}{W_0^{1,p_2(x)}(\Omega)}
\newcommand{\bb}{\begin{equation}}
\newcommand{\bbb}{\end{equation}}
\newcommand{\WW}{{\mathcal W}}
\begin{document}

\title[Double phase problems with variable growth]{Double phase problems with variable growth}
\author[M. Cencelj]{Matija Cencelj}
\address[M. Cencelj]{Faculty of Education and Faculty of Mathematics and Physics, University of Ljubljana \& Institute of Mathematics, Physics and Mechanics, 1000 Ljubljana, Slovenia}
\email{\tt matija.cencelj@guest.arnes.si}
\author[V.D. R\u{a}dulescu]{Vicen\c{t}iu D. R\u{a}dulescu}
\address[V.D. R\u{a}dulescu]{Faculty of Applied Mathematics, AGH University of Science and Technology, al. Mickiewicza 30, 30-059 Krak\'ow, Poland \& Institute of Mathematics, Physics and Mechanics, 1000 Ljubljana, Slovenia \& Institute of Mathematics ``Simion Stoilow" of the Romanian Academy of Sciences, P.O. Box 1-764,
          014700 Bucharest, Romania}
\email{\tt vicentiu.radulescu@imar.ro}
\author[D.D. Repov\v{s}]{Du\v{s}an D. Repov\v{s}}
\address[D.D. Repov\v{s}]{Faculty of Education and Faculty of Mathematics and Physics, University of Ljubljana \& Institute of Mathematics, Physics and Mechanics, 1000 Ljubljana, Slovenia}
\email{\tt dusan.repovs@guest.arnes.si}
\keywords{Nonhomogeneous differential operator, double phase problem, high perturbation, spectrum of nonlinear operators.\\
\phantom{aa} 2010 AMS Subject Classification: 35P30 (Primary); 35J60, 47J10, 58C40, 58E30.}
\begin{abstract}
We consider a class of double phase variational integrals driven by nonhomogeneous potentials. We study the associated Euler equation and we highlight the existence of two different Rayleigh quotients. One of them is in relationship with the existence of an infinite interval of eigenvalues while the second one is associated with the nonexistence of eigenvalues. The notion of eigenvalue is understood in the sense of pairs of nonlinear operators, as introduced by Fu\v{c}i­k, Ne\v{c}as, Sou\v{c}ek, and  Sou\v{c}ek.  The analysis developed in this paper extends the abstract framework corresponding to some standard cases associated to the $p(x)$-Laplace operator, the generalized mean curvature operator, or the capillarity differential operator with variable exponent. The results contained in this paper complement the pioneering contributions of Marcellini, Mingione {\it et al.} in the field of variational integrals with unbalanced growth.
\end{abstract}
\maketitle
\begin{center}{\small\it This paper is dedicated with esteem and gratitude to Professor Carlo Sbordone on the occasion of his 70th birthday}\end{center}

\section{Unbalanced problems {\it \`a la} Marcellini and Mingione}\label{sect1}
The study of differential equations and variational problems involving variable
growth conditions was motivated by their various applications. In 1920, Bingham was
surprised to discover that some paints do not run like honey. He studied such
a behavior and described a strange phenomenon. There are fluids that first
flow, then stop spontaneously (Bingham fluids). Inside them, the forces that
create the flows do not reach a threshold. As this threshold is not reached, the fluid
flow deforms as a solid. Invented in the 17th century, the ``Flemish medium"
makes painting oil thixotropic: it flows under pressure of the brush, but freezes
as soon as you leave it to rest. While the exact composition of the Flemish
medium remains unknown, it is known that the bonds form gradually between
its components, which is why the picture freezes in a few minutes. Thanks to
this wonderful medium, Rubens was able to paint {\it La Kermesse} in only 24
hours.

The recent systematic study of nonlinear problems with variable exponents
is motivated by the description of several relevant phenomena that arise in the applied sciences. For instance, this mechanism can be used to give models for non-Newtonian fluids
that change their viscosity in the presence of an electro-magnetic field, which in this case
influences the size of the variable exponent, see Halsey \cite{hal} and R\r{u}\v{z}i\v{c}ka \cite{R}. Similar models appear in image segmentation, see Chen, Levine and Rao \cite{CLR}. Their framework is a combination of the Gaussian smoothing and regularization based on the
total variation.

In this paper, we are concerned with the study of a nonlinear problem whose features are the following:\\ (i) the presence of several differential operators with different growth, which generates a {\it double phase} associated energy;\\ (ii) the presence of several {\it variable} potentials, which involves a different environment (according to the point) as well as a degenerate abstract setting.

These attributes of the present paper imply multiple combined effects of the multiple variable exponents, the reaction term, and the values of a suitable real parameter. In particular, the existence of several Rayleigh quotients implies the existence of solutions in the case of high perturbations, while nontrivial solutions do not exist in the case of small perturbations of the reaction term. This is in accordance with respect to the position of a suitable parameter with respect to these Rayleigh quotients.

We recall in what follows some of the outstanding contributions of the Italian school (Marcellini, Mingione, Colombo, Baroni, {\it et al.}) to the study of unbalanced integral functionals and double phase problems. In the next section of this paper, we recall some basic properties of the Lebesgue and Sobolev spaces with variable exponent. We also refer to a new nonhomogeneous differential operator, which will be used in the present paper in the abstract setting of double phase problems with variable exponent. This operator has a broad spectrum and it extends not only the $p(x)$--Laplace operator but also the generalized mean curvature operator, the capillarity operator with variable exponent and other non-homogeneous differential operators.
The main results and proofs and developed in the remaining section of this paper.

This paper was motivated by several recent contributions to the qualitative analysis of nonlinear problems with unbalanced growth. We first refer to the pioneering contributions of Marcellini \cite{marce1,marce2,marce3} who studied lower semicontinuity and regularity properties of minimizers of certain quasiconvex integrals. Problems of this type arise in nonlinear elasticity and are connected with the deformation of an elastic body, cf. Ball \cite{ball1,ball2}. We also refer to Fusco and Sbordone \cite{fusco} for the study of regularity of minima of anisotropic integrals.

In order to recall the roots of double phase problems, let us assume that $\Omega$ is a bounded domain in $\RR^N$ ($N\geq 2$) with smooth boundary. If $u:\Omega\to\RR^N$ is the displacement and if $Du$ is the $N\times N$  matrix of the deformation gradient, then the total energy can be represented by an integral of the type
\bb\label{paolo}I(u)=\intom f(x,Du(x))dx,\bbb
where the energy function $f=f(x,\xi):\Omega\times\RR^{N\times N}\to\RR$ is quasiconvex with respect to $\xi$, see Morrey \cite{morrey}. One of the simplest examples considered by Ball is given by functions $f$ of the type
$$f(\xi)=g(\xi)+h({\rm det}\,\xi),$$
where ${\rm det}\,\xi$ is the determinant of the $N\times N$ matrix $\xi$, and $g$, $h$ are nonnegative convex functions, which satisfy the growth conditions
$$g(\xi)\geq c_1\,|\xi|^p;\quad\lim_{t\to+\infty}h(t)=+\infty,$$
where $c_1$ is a positive constant and $1<p<N$. The condition $p\leq N$ is necessary to study the existence of equilibrium solutions with cavities, that is, minima of the integral \eqref{paolo} that are discontinuous at one point where a cavity forms; in fact, every $u$ with finite energy belongs to the Sobolev space $W^{1,p}(\Omega,\RR^N)$, and thus it is a continuous function if $p>N$. In accordance with these problems arising in nonlinear elasticity, Marcellini \cite{marce1,marce2} considered continuous functions $f=f(x,u)$ with {\it unbalanced growth} that satisfy
$$c_1\,|u|^p\leq |f(x,u)|\leq c_2\,(1+|u|^q)\quad\mbox{for all}\ (x,u)\in\Omega\times\RR,$$
where $c_1$, $c_2$ are positive constants and $1\leq p\leq q$. Regularity and existence of solutions of elliptic equations with $p,q$--growth conditions were studied in \cite{marce2}.

The study of non-autonomous functionals characterized by the fact that the energy density changes its ellipticity and growth properties according to the point has been continued in a series of remarkable papers by Mingione {\it et al.} \cite{mingi1}--\cite{mingi5}. These contributions are in relationship with the works of Zhikov \cite{zhikov1}, in order to describe the
behavior of phenomena arising in nonlinear
elasticity.
In fact, Zhikov intended to provide models for strongly anisotropic materials in the contect of homogenisation.
These functionals revealed to be important also in the study of duality theory
and in the context of the Lavrentiev phenomenon \cite{zhikov2}. In particular, Zhikov considered three different model
functionals for this situation in relation to the Lavrentiev phenomenon. These are
\bb\label{mingfunc}\begin{array}{ll}
{\mathcal M}(u)&\di :=\intom c(x)|Du|^2dx,\quad 0<1/c(\cdot)\in L^t(\Omega),\ t>1\\
{\mathcal V}(u)&\di :=\intom |Du|^{p(x)}dx,\quad 1<p(x)<\infty\\
{\mathcal P}_{p,q}(u)&\di :=\intom (|Du|^p+a(x)|Du|^q)dx,\quad 0\leq a(x)\leq L,\ 1<p<q.
\end{array}
\bbb

The functional ${\mathcal M}$ is well-known and there is a loss of ellipticity on the set $\{x\in\Omega;\ c(x)=0\}$. This functional has been studied at length in the context of equations involving
Muckenhoupt weights. The functional ${\mathcal V}$ has also been the object of intensive interest nowadays and a
huge literature was developed on it.  We  refer to Acerbi and Mingione \cite{acerbi2} for gradient estimates and pioneering contributions to the qualitative analyze of minimizers of nonstandard energy functionals involving variable exponents. We refer to Colombo and Mingione \cite{colombo} as a reference paper on double phase problems studied independently of their variational structure.
We also point out the abstract setting, respectively the variational analysis developed in the monographs  by Diening,  Harjulehto, H\"{a}st\"{o}, and  R\r{u}\v{z}i\v{c}ka \cite{diening}, respectively by R\u adulescu and Repov\v s \cite{radrep}.
The energy functional defined by ${\mathcal V}$ was used to build models for strongly
anisotropic materials: in a material made of different components, the exponent $p(x)$
dictates the geometry of a composite that changes its hardening exponent according to
the point.  The functional ${\mathcal P}_{p,q}$ defined in \eqref{mingfunc} appears as un upgraded version of ${\mathcal V}$. Again, in this
case, the modulating coefficient $a(x)$ dictates the geometry of the composite made by
two differential materials, with hardening exponents $p$ and $q$, respectively.

The functionals displayed in \eqref{mingfunc} fall in the realm of the so-called functionals with
nonstandard growth conditions of $(p, q)$--type, according to Marcellini's terminology. These are functionals of the type in \eqref{paolo}, where the energy density satisfies
$$|\xi|^p\leq f(x,\xi)\leq  |\xi|^q+1,\quad 1\leq p\leq q.$$

Another significant model example of a functional with $(p,q)$--growth studied by Mingione {\it et al.} \cite{mingi1}--\cite{mingi5} is given by
$$u\mapsto \intom |Du|^p\log (1+|Du|)dx,\quad p\geq 1,$$
which is a logarithmic perturbation of the $p$-Dirichlet energy.

General models with $(p,q)$-growth in the context of
geometrically constrained problems have been recently studied by De Filippis \cite{cristina}. This seems to be the first work dealing with $(p,q)$-conditions with manifold constraint. Refined regularity results
are proved
in \cite{cristina},
by using an approximation technique relying on estimates obtained through a careful use of difference quotients. A key role is played by the method developed by
Esposito, Leonetti, and Mingione \cite{esposito} in order to prove the equivalence between the absence of Lavrentiev phenomenon and the extra regularity of the minimizers for unconstrained, non-autonomous variational problems.

The main feature of this paper is the study of a class of unbalanced double phase problems with variable exponent. In such a way, the present paper complements our previous related contributions to this field, see \cite{dou1}, \cite{dou2}. The present paper extends and complements the main results obtained in \cite{rjam} and \cite{chorfi} (see also \cite[Section 3.3]{radrep}).

\section{Spaces and operators with variable exponent}\label{sect2}
Nonlinear problems with non-homogeneous structure are motivated by several models in mathematical physics and other applied sciences that are described by partial differential equations with one or more variable exponents. In some circumstances, the standard analysis based on the theory of usual Lebesgue and Sobolev function  spaces,  $L^p$ and $W^{1,p}$, is not appropriate in the framework of material that involve non-homogeneities.
The presence of a variable exponent allows to describe in a proper and accurate manner the geometry of a material which is allowed to change its hardening  exponent according to the point.
This leads to the analysis of variable exponents Lebesgue and Sobolev function spaces (denoted by $L^{p(x)}$ and $W^{1,p(x)}$), where $p$ is a real-valued (non-constant) function.

\subsection{Lebesgue and Sobolev spaces with variable potential}
 Let $S(\Omega )$ be the set of all measurable real
valued functions defined on $\Omega $. Let
\begin{equation*}
C_{+}(\overline{\Omega })=\left\{ u;\ u\in C(\overline{\Omega })%
\text{, }u(x)>1\text{ for }x\in \overline{\Omega } \right\} ,
\end{equation*}%
\begin{equation*}
L^{p(\cdot )}(\Omega )=\left\{ u\in S(\Omega );\ \int_{\Omega }|
u(x)|^{p(x)}dx<\infty \right\} .
\end{equation*}

For all $p\in C_+(\Omega)$ we define
$$p^+=\sup_{x\in\Omega}p(x)\quad\mbox{and}\quad p^-=\inf_{x\in\Omega}p(x).$$

The function space $L^{p(\cdot )}(\Omega )$ is equipped with the Luxemburg norm%
\begin{equation*}
\left\vert u\right\vert _{L^{p(\cdot )}(\Omega )}=\inf \left\{ \lambda
>0;\ \int_{\Omega }\left\vert \frac{u(x)}{\lambda }\right\vert
^{p(x)}dx\leq 1 \right\} .
\end{equation*}

Then ($L^{p(\cdot )}(\Omega )$, $\left\vert \cdot \right\vert
_{L^{p(\cdot )}(\Omega )}$) becomes a Banach space, and we call it a variable exponent
Lebesgue space.

The following properties of spaces with variable exponent (Propositions \ref{prop2.1}--\ref{prop2.4}) are essentially due to Fan and Zhao \cite{cite7}, see also  \cite{diening}, \cite{radrep}, and \cite{radnla}.
We also refer to the important contributions of Edmunds {\it et al.}, see \cite{edm2}--\cite{edm3}.

\begin{proposition}\label{prop2.1} The space $%
(L^{p(\cdot )}(\Omega ),\left\vert u\right\vert _{L^{p(\cdot )}(\Omega )})$
is a separable, uniformly convex Banach space, and its conjugate space is $%
L^{p^{\prime }(\cdot )}(\Omega )$, where $\frac{1}{p(x)}+\frac{1}{p^{\prime
}(x)}=1$. For any $u\in L^{p(\cdot )}(\Omega )$ and $v\in L^{p^{\prime
}(\cdot )}(\Omega )$, we have the following H\"older inequality
\begin{equation*}
\left\vert \int_{\Omega }uvdx\right\vert \leq \left(\frac{1}{p^{-}}+\frac{1}{%
p^{\prime -}}\right)\left\vert u\right\vert _{L^{p(\cdot )}(\Omega )}\left\vert
v\right\vert _{L^{p^{\prime }(\cdot )}(\Omega )}.
\end{equation*}
\end{proposition}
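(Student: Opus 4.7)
The plan is to organize the proof around the modular functional $\rho(u) := \int_\Omega |u(x)|^{p(x)} dx$, which is the central analytic object governing the space. All four assertions (Banach, separable, uniformly convex, dual identification, Hölder) can be obtained by comparing the Luxemburg norm with $\rho$ and transferring classical $L^p$ arguments pointwise in $x$.

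First I would verify the norm axioms for $|\cdot|_{L^{p(\cdot)}(\Omega)}$: positive definiteness and positive homogeneity are immediate, and the triangle inequality follows from the convexity of $t\mapsto t^{p(x)}$ through a standard scaling argument with the infimum. Completeness would be obtained by the usual recipe: from a Cauchy sequence $(u_n)$ extract a rapidly converging subsequence with $|u_{n_{k+1}}-u_{n_k}|_{L^{p(\cdot)}(\Omega)}\le 2^{-k}$; small Luxemburg norm forces small modular, so $\sum_k |u_{n_{k+1}}-u_{n_k}|$ is finite almost everywhere; define $u$ as the pointwise limit, and apply Fatou's lemma to $\rho(\cdot)$ to conclude both $u\in L^{p(\cdot)}(\Omega)$ and $u_{n_k}\to u$ in norm. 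Separability is then obtained as in the classical case: first truncate and restrict to bounded sets, then approximate by simple functions with rational coefficients supported on a countable generating family of measurable sets, where the $L^{p(\cdot)}$-convergence is checked via dominated convergence on $\rho$.

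Next, the Hölder inequality is the easiest piece. Applying Young's inequality $ab\le a^{p(x)}/p(x)+b^{p'(x)}/p'(x)$ to $a=|u(x)|/|u|_{L^{p(\cdot)}(\Omega)}$ and $b=|v(x)|/|v|_{L^{p'(\cdot)}(\Omega)}$ (assuming both norms are nonzero), integrating over $\Omega$, and using the defining property $\int_\Omega |u/|u|_{L^{p(\cdot)}(\Omega)}|^{p(x)}dx\le 1$, one obtains the stated inequality with constant $1/p^-+1/(p')^-$. The duality $(L^{p(\cdot)}(\Omega))^*\cong L^{p'(\cdot)}(\Omega)$ would then be established in two steps: Hölder gives the embedding $L^{p'(\cdot)}(\Omega)\hookrightarrow (L^{p(\cdot)}(\Omega))^*$ via $v\mapsto\int_\Omega uv\,dx$; surjectivity onto the dual would be shown by a Radon–Nikodym style argument, producing a candidate density $v$ from a bounded functional and then verifying $v\in L^{p'(\cdot)}(\Omega)$ through the layer-cake decomposition applied to the modular.

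The main obstacle is the uniform convexity, since variable exponents rule out a direct appeal to Clarkson's inequalities. My plan is to first prove \emph{modular uniform convexity}: for every $\varepsilon>0$ there exists $\delta>0$ such that
\begin{equation*}
\rho\!\left(\frac{u+v}{2}\right)\le \frac{\rho(u)+\rho(v)}{2}-\delta,\quad \text{whenever } \rho(u)=\rho(v)=1 \text{ and } \rho\!\left(\frac{u-v}{2}\right)\ge\varepsilon.
\end{equation*}
This is obtained pointwise from the uniform convexity of $t\mapsto t^{p(x)}$, exploiting the bounds $1<p^-\le p^+<\infty$ to make the constants uniform in $x$. The second step is to transfer this to the norm: using the fundamental comparison $\min(|u|^{p^-}_{L^{p(\cdot)}(\Omega)},|u|^{p^+}_{L^{p(\cdot)}(\Omega)})\le\rho(u)\le\max(|u|^{p^-}_{L^{p(\cdot)}(\Omega)},|u|^{p^+}_{L^{p(\cdot)}(\Omega)})$, modular uniform convexity upgrades to uniform convexity of the Luxemburg norm, and, as a corollary, reflexivity via Milman–Pettis, which in turn makes the duality argument above go through cleanly.
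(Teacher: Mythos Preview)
Your outline is a reasonable and essentially correct sketch of the standard proof of these facts, but you should be aware that the paper itself does \emph{not} prove Proposition~2.1 at all: it is stated as a background result with the attribution ``essentially due to Fan and Zhao~\cite{cite7}, see also~\cite{diening}, \cite{radrep}, and~\cite{radnla}.'' So there is no proof in the paper to compare against; the authors simply import the proposition from the literature.

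That said, your plan tracks closely the arguments in those references. A couple of minor remarks on the sketch itself: in the uniform-convexity step, the pointwise inequality you invoke genuinely splits into two regimes, $p(x)\ge 2$ and $1<p(x)<2$, corresponding to Clarkson's first and second inequalities, and the constants coming out of each regime are controlled by $p^-$ and $p^+$ respectively---you gesture at this with ``exploiting the bounds $1<p^-\le p^+<\infty$,'' but in a full write-up you would want to make the split explicit. Also, the transfer from modular uniform convexity to norm uniform convexity is correct but slightly more delicate than the one-line modular--norm comparison you quote; one typically needs to argue via the unit-ball condition $\rho(u)\le 1 \Leftrightarrow |u|_{L^{p(\cdot)}(\Omega)}\le 1$ together with the convexity and left-continuity of the modular (this is worked out, e.g., in Musielak's monograph or in~\cite{diening}). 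None of this is a gap---your high-level strategy is the right one---but since the paper offers no proof, the relevant comparison is that you are reproducing, in outline, exactly the arguments contained in the cited sources.
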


\begin{proposition}\label{prop2.2} If $f:$ $\Omega
\times
\mathbb{R}
\rightarrow
\mathbb{R}
$ is a Carath\'{e}odory function and satisfies%
\begin{equation*}
\left\vert f(x,s)\right\vert \leq d(x)+b\left\vert s\right\vert
^{p_{1}(x)/p_{2}(x)}\text{ for any }x\in \Omega ,s\in
\mathbb{R}
,
\end{equation*}%
where $p_{1}$, $p_{2}\in C_{+}(\overline{\Omega })$, $d(x)\in L^{p_{2}(\cdot
)}(\Omega )$, $d(x)\geq 0$, $b\geq 0$, then the Nemytsky operator from $%
L^{p_{1}(\cdot )}(\Omega )$ to $L^{p_{2}(\cdot )}(\Omega )$ defined by $%
(N_{f}u)(x)=f(x,u(x))$ is a continuous and bounded operator. \end{proposition}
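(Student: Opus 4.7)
The plan is to prove well-definedness together with boundedness first, and then deduce continuity by a subsequence-and-dominated-convergence argument. The central tool throughout is the equivalence between the Luxemburg norm $|\cdot|_{L^{p(\cdot)}(\Omega)}$ and the associated modular $\rho_p(u):=\intom |u(x)|^{p(x)}dx$: one has $|u|_{L^{p(\cdot)}(\Omega)}\le 1$ iff $\rho_p(u)\le 1$, and in general
\begin{equation*}
\min\{\rho_p(u)^{1/p^{+}},\rho_p(u)^{1/p^{-}}\}\le |u|_{L^{p(\cdot)}(\Omega)}\le \max\{\rho_p(u)^{1/p^{+}},\rho_p(u)^{1/p^{-}}\},
\end{equation*}
which follows at once from the definition of the Luxemburg norm. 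Consequently, modular bounds and norm bounds are freely interchangeable.

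For well-definedness and boundedness, fix $u\in L^{p_1(\cdot)}(\Omega)$. Combining the growth hypothesis with the elementary inequality $(a+b)^{p_2(x)}\le 2^{p_2^{+}-1}(a^{p_2(x)}+b^{p_2(x)})$ gives
\begin{equation*}
|f(x,u(x))|^{p_2(x)}\le 2^{p_2^{+}-1}\bigl(d(x)^{p_2(x)}+C\,|u(x)|^{p_1(x)}\bigr),
\end{equation*}
with $C:=\max(b^{p_2^{-}},b^{p_2^{+}})$; the right-hand side is integrable because $d\in L^{p_2(\cdot)}(\Omega)$ and $u\in L^{p_1(\cdot)}(\Omega)$. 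Therefore $N_fu\in L^{p_2(\cdot)}(\Omega)$. Moreover, integrating the same inequality shows that if $u$ ranges over a bounded set of $L^{p_1(\cdot)}(\Omega)$, then $\rho_{p_1}(u)$ stays bounded, hence so does $\rho_{p_2}(N_fu)$, and the norm-modular inequality above produces the desired uniform bound on $|N_fu|_{L^{p_2(\cdot)}(\Omega)}$.

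For continuity, let $u_n\to u$ in $L^{p_1(\cdot)}(\Omega)$ and suppose, for contradiction, that $|N_fu_n-N_fu|_{L^{p_2(\cdot)}(\Omega)}\ge\varepsilon_0>0$ along some subsequence. Extract a further subsequence, still denoted $(u_n)$, such that $u_n\to u$ almost everywhere in $\Omega$ and such that there exists $h\in L^{p_1(\cdot)}(\Omega)$ with $|u_n(x)|\le h(x)$ a.e.; this is the variable exponent analogue of the Riesz--Fischer subsequence principle, obtained by selecting indices with $|u_{n_{k+1}}-u_{n_k}|_{L^{p_1(\cdot)}(\Omega)}\le 2^{-k}$ and controlling the telescoping sum $\sum_k|u_{n_{k+1}}-u_{n_k}|$. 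The Carath\'{e}odory property of $f$ gives $f(x,u_n(x))\to f(x,u(x))$ a.e., while the growth condition provides an $L^1(\Omega)$ dominant of $|f(x,u_n)-f(x,u)|^{p_2(x)}$ in terms of $d(x)^{p_2(x)}$, $h(x)^{p_1(x)}$ and $|u(x)|^{p_1(x)}$. Lebesgue dominated convergence then forces $\rho_{p_2}(N_fu_n-N_fu)\to 0$, which by the norm-modular equivalence contradicts the assumed lower bound $\varepsilon_0$.

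The main obstacle is constructing the dominating function $h\in L^{p_1(\cdot)}(\Omega)$: in the variable exponent setting one cannot directly apply an $L^p$ triangle inequality to the telescoping series, so one must control the modular $\rho_{p_1}(\sum_k|u_{n_{k+1}}-u_{n_k}|)$ using the geometric decay $2^{-k}$ together with a second appeal to the norm-modular inequality. Once this dominant is in hand the remainder of the continuity argument is a standard transcription of the classical $L^p$ proof.
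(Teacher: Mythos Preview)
Your argument is correct and follows the standard route (essentially the one in Fan--Zhao and in Diening et al.). Note, however, that the paper itself does not supply a proof of this proposition: it is stated as a background result and attributed to the references \cite{cite7,diening,radrep,radnla}, so there is no in-paper proof to compare against.

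One small remark on your closing paragraph: the obstacle you flag is not really there. The Luxemburg norm $|\cdot|_{L^{p_1(\cdot)}(\Omega)}$ is a genuine norm on a Banach space (Proposition~\ref{prop2.1}), so the ordinary triangle inequality applies termwise to the telescoping series $\sum_k|u_{n_{k+1}}-u_{n_k}|$; with $|u_{n_{k+1}}-u_{n_k}|_{L^{p_1(\cdot)}}\le 2^{-k}$ the partial sums are Cauchy in $L^{p_1(\cdot)}(\Omega)$, and completeness gives $h\in L^{p_1(\cdot)}(\Omega)$ with $|h|_{L^{p_1(\cdot)}}\le |u_{n_1}|_{L^{p_1(\cdot)}}+1$. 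No detour through the modular is needed for this step. Apart from this harmless over-complication, the well-definedness, boundedness, and dominated-convergence continuity arguments are all in order.
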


\begin{proposition}\label{prop2.3} If we denote
\begin{equation*}
\rho _{p(\cdot )}(u)=\int_{\Omega }\left\vert u\right\vert ^{p(x)}dx\text{, }%
\forall u\in L^{p(\cdot )}(\Omega ),
\end{equation*}%
then the following properties hold:

i) $\left\vert u\right\vert _{L^{p(\cdot )}(\Omega
)}<1\ (=1;>1)\Longleftrightarrow \rho _{p(\cdot )}(u)<1\ (=1;>1);$

ii) $\left\vert u\right\vert _{L^{p(\cdot )}(\Omega )}>1\Longrightarrow
\left\vert u\right\vert _{L^{p(\cdot )}(\Omega )}^{p^{-}}\leq \rho _{p(\cdot
)}(u)\leq \left\vert u\right\vert _{L^{p(\cdot )}(\Omega )}^{p^{+}};$

$\left\vert u\right\vert _{L^{p(\cdot )}(\Omega )}<1\Longrightarrow
\left\vert u\right\vert _{L^{p(\cdot )}(\Omega )}^{p^{-}}\geq \rho _{p(\cdot
)}(u)\geq \left\vert u\right\vert _{L^{p(\cdot )}(\Omega )}^{p^{+}};$

iii) $\left\vert u\right\vert _{L^{p(\cdot )}(\Omega )}\rightarrow \infty
\Longleftrightarrow \rho _{p(\cdot )}(u)\rightarrow \infty .$\end{proposition}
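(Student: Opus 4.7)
The plan is to derive all three parts from a single scaling principle: for any $u\in L^{p(\cdot)}(\Omega)\setminus\{0\}$ the function $\lambda\mapsto \rho_{p(\cdot)}(u/\lambda)=\int_\Omega \lambda^{-p(x)}|u(x)|^{p(x)}dx$ is continuous and strictly decreasing on $(0,\infty)$, with limits $+\infty$ at $0^+$ and $0$ at $+\infty$. Continuity follows from the dominated convergence theorem applied to $\lambda^{-p(x)}|u(x)|^{p(x)}$ (the bounds $p^-\le p(x)\le p^+$ supply the integrable dominant on a neighborhood of any $\lambda>0$); monotonicity and the limits follow from monotone/dominated convergence directly. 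Combining these with the definition of the Luxemburg norm as an infimum, I obtain the central identity $\rho_{p(\cdot)}(u/|u|_{L^{p(\cdot)}(\Omega)})=1$ whenever $u\neq 0$: the defining inequality gives $\rho_{p(\cdot)}(u/|u|_{L^{p(\cdot)}(\Omega)})\le 1$ by continuity from the right, and strict inequality here would, by continuity from the left, contradict $|u|_{L^{p(\cdot)}(\Omega)}$ being an infimum. I treat this identity, rather than the one-sided inequality coming directly from the definition, as the main tool.

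For part (i), if $|u|_{L^{p(\cdot)}(\Omega)}<1$ I pick $\lambda_0\in(|u|_{L^{p(\cdot)}(\Omega)},1)$; then $\rho_{p(\cdot)}(u/\lambda_0)\le 1$, and since $0<\lambda_0<1$ gives $\lambda_0^{p(x)}\le\lambda_0^{p^-}$, rescaling yields $\rho_{p(\cdot)}(u)=\int_\Omega\lambda_0^{p(x)}|u/\lambda_0|^{p(x)}dx\le\lambda_0^{p^-}<1$. Conversely, $\rho_{p(\cdot)}(u)<1$ rules out $|u|_{L^{p(\cdot)}(\Omega)}\ge 1$ because $\lambda:=|u|_{L^{p(\cdot)}(\Omega)}\ge 1$ would force $\lambda^{-p(x)}\le 1$, whence $1=\rho_{p(\cdot)}(u/\lambda)\le\rho_{p(\cdot)}(u)<1$, a contradiction. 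The strict-reverse implications are obtained symmetrically (taking $\lambda_0\in(1,|u|_{L^{p(\cdot)}(\Omega)})$), and the equality case $|u|_{L^{p(\cdot)}(\Omega)}=1\Leftrightarrow\rho_{p(\cdot)}(u)=1$ falls out by combining the two strict equivalences.

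For part (ii), using the central identity I write
\[
\rho_{p(\cdot)}(u)=\int_\Omega |u|_{L^{p(\cdot)}(\Omega)}^{p(x)}\,\Bigl|\frac{u(x)}{|u|_{L^{p(\cdot)}(\Omega)}}\Bigr|^{p(x)}dx.
\]
When $|u|_{L^{p(\cdot)}(\Omega)}>1$ the factor satisfies $|u|_{L^{p(\cdot)}(\Omega)}^{p^-}\le|u|_{L^{p(\cdot)}(\Omega)}^{p(x)}\le|u|_{L^{p(\cdot)}(\Omega)}^{p^+}$ pointwise; pulling these constants outside the integral and using the identity $\rho_{p(\cdot)}(u/|u|_{L^{p(\cdot)}(\Omega)})=1$ gives the asserted chain. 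When $|u|_{L^{p(\cdot)}(\Omega)}<1$ the two inequalities between powers reverse because $a\mapsto a^t$ is decreasing on $(0,1)$, which produces the second chain.

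For part (iii) the forward direction is immediate from (ii): if $|u_n|_{L^{p(\cdot)}(\Omega)}\to\infty$ then eventually $|u_n|_{L^{p(\cdot)}(\Omega)}>1$ and $\rho_{p(\cdot)}(u_n)\ge|u_n|_{L^{p(\cdot)}(\Omega)}^{p^-}\to\infty$. The converse I prove by contraposition: if a subsequence satisfies $|u_{n_k}|_{L^{p(\cdot)}(\Omega)}\le M$, then (i) and (ii) together give $\rho_{p(\cdot)}(u_{n_k})\le\max(1,M^{p^+})$, contradicting $\rho_{p(\cdot)}(u_n)\to\infty$. I expect the main technical obstacle to be the continuity identity $\rho_{p(\cdot)}(u/|u|_{L^{p(\cdot)}(\Omega)})=1$: one must rigorously verify that $\rho_{p(\cdot)}(u/\lambda_n)\to\rho_{p(\cdot)}(u/\lambda)$ for $\lambda_n\to\lambda>0$ and simultaneously exclude a jump at the infimum, after which every remaining step is algebra on the bounds $p^-\le p(x)\le p^+$.
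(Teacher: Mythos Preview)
Your proof is correct. The paper itself does not supply a proof of this proposition; it merely records the result as standard and attributes it to Fan and Zhao \cite{cite7} (see also \cite{diening}, \cite{radrep}, \cite{radnla}). Your argument is the usual one found in those references: establish continuity and strict monotonicity of $\lambda\mapsto\rho_{p(\cdot)}(u/\lambda)$, deduce the unit-ball identity $\rho_{p(\cdot)}\bigl(u/|u|_{L^{p(\cdot)}(\Omega)}\bigr)=1$, and then read off (i)--(iii) from the elementary bounds $p^-\le p(x)\le p^+$. One minor remark: in part~(ii) for $|u|_{L^{p(\cdot)}(\Omega)}<1$ you should briefly note the trivial case $u=0$ separately, since the central identity does not apply there; the inequalities then hold with equality.
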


\begin{proposition}\label{prop2.4} If $u$, $u_{n}\in
L^{p(\cdot )}(\Omega )$, $n=1,2,\cdots ,$ then the following statements are
equivalent:

1) $\underset{k\rightarrow \infty }{\lim }$ $\left\vert u_{k}-u\right\vert
_{L^{p(\cdot )}(\Omega )}=0;$

2) $\underset{k\rightarrow \infty }{\lim }$ $\rho _{p(\cdot )}\left(
u_{k}-u\right) =0;$

3) $u_{k}\rightarrow u$ in measure in $\Omega $ and $\underset{k\rightarrow
\infty }{\lim }$ $\rho _{p(\cdot )}\left( u_{k}\right) =\rho _{p(\cdot
)}\left( u\right) $.\end{proposition}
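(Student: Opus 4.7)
The plan is to close the cycle $(1)\Leftrightarrow(2)$, $(1)\Rightarrow(3)$, $(3)\Rightarrow(2)$. The equivalence $(1)\Leftrightarrow(2)$ is essentially bookkeeping on Proposition~\ref{prop2.3}; the implication $(1)\Rightarrow(3)$ splits cleanly into a Chebyshev-type estimate (for convergence in measure) and a continuity-of-the-modular estimate. The genuinely nontrivial step, which I expect to be the main obstacle, is $(3)\Rightarrow(2)$: from convergence in measure together with $\rho_{p(\cdot)}(u_k)\to\rho_{p(\cdot)}(u)$ one must squeeze out $\rho_{p(\cdot)}(u_k-u)\to 0$, and this calls for a Brezis--Lieb type argument adapted to variable exponents.

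For $(1)\Leftrightarrow(2)$ I would use Proposition~\ref{prop2.3}: once $|u_k-u|_{L^{p(\cdot)}(\Omega)}<1$, part~(ii) yields the sandwich
\[
|u_k-u|_{L^{p(\cdot)}(\Omega)}^{p^{+}}\le \rho_{p(\cdot)}(u_k-u)\le |u_k-u|_{L^{p(\cdot)}(\Omega)}^{p^{-}},
\]
so norm convergence to $0$ forces modular convergence to $0$; conversely, once the modular drops below $1$, part~(i) forces the norm to be $<1$, and the same sandwich inverts. For the implication $(2)\Rightarrow u_k\to u$ in measure, I would apply Chebyshev: for any $\delta>0$,
\[
\meas\{x\in\Omega:|u_k(x)-u(x)|>\delta\}\le \frac{\rho_{p(\cdot)}(u_k-u)}{\min(\delta^{p^{-}},\delta^{p^{+}})}\longrightarrow 0.
\]
The other half of $(3)$, namely $\rho_{p(\cdot)}(u_k)\to\rho_{p(\cdot)}(u)$, would follow from the pointwise bound $\bigl||a|^{p(x)}-|b|^{p(x)}\bigr|\le p^{+}(|a|+|b|)^{p(x)-1}|a-b|$ combined with the H\"older inequality of Proposition~\ref{prop2.1} and boundedness of $\{u_k\}$ in $L^{p(\cdot)}(\Omega)$.

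The principal work goes into $(3)\Rightarrow(2)$. I would first pass to a subsequence (still denoted $u_k$) converging a.e.\ to $u$, and introduce
\[
g_k(x):=\tfrac{1}{2}|u_k(x)|^{p(x)}+\tfrac{1}{2}|u(x)|^{p(x)}-\left|\frac{u_k(x)-u(x)}{2}\right|^{p(x)}.
\]
Convexity of $t\mapsto |t|^{p(x)}$ (valid since $p(x)>1$) gives $g_k\ge 0$ a.e., while the a.e.\ convergence $u_k\to u$ gives $g_k(x)\to |u(x)|^{p(x)}$ a.e. Fatou's lemma applied to $g_k$, together with the hypothesis $\rho_{p(\cdot)}(u_k)\to \rho_{p(\cdot)}(u)$, then forces $\limsup_{k\to\infty}\rho_{p(\cdot)}((u_k-u)/2)\le 0$, hence $\rho_{p(\cdot)}((u_k-u)/2)\to 0$. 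Proposition~\ref{prop2.3}(ii) promotes this modular convergence to $|u_k-u|_{L^{p(\cdot)}(\Omega)}\to 0$, whence by the already proved $(1)\Leftrightarrow(2)$ we obtain $\rho_{p(\cdot)}(u_k-u)\to 0$. A subsequence--of--subsequence argument extends the conclusion from the extracted subsequence to the full sequence, closing the cycle. The two delicate points—$\rho_{p(\cdot)}$ is not homogeneous under scaling, and convergence in measure only provides an a.e.~subsequence—are both neutralised by routing the final step through Proposition~\ref{prop2.3} and the subsequence principle.
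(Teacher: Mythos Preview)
The paper does not actually prove Proposition~\ref{prop2.4}: it is stated as a background fact and attributed to Fan and Zhao~\cite{cite7} (see also \cite{diening,radrep,radnla}), with no argument given. So there is no ``paper's own proof'' to compare against.

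That said, your proposed proof is correct and self-contained. The equivalence $(1)\Leftrightarrow(2)$ via the norm--modular sandwich in Proposition~\ref{prop2.3} is standard; the Chebyshev estimate for convergence in measure and the mean-value/H\"older argument for $\rho_{p(\cdot)}(u_k)\to\rho_{p(\cdot)}(u)$ are clean. The key step $(3)\Rightarrow(2)$ is handled well: the convexity trick with
\[
g_k=\tfrac12|u_k|^{p(x)}+\tfrac12|u|^{p(x)}-\Bigl|\tfrac{u_k-u}{2}\Bigr|^{p(x)}\ge 0
\]
together with Fatou's lemma and the hypothesis $\rho_{p(\cdot)}(u_k)\to\rho_{p(\cdot)}(u)<\infty$ yields $\rho_{p(\cdot)}\bigl((u_k-u)/2\bigr)\to 0$ along an a.e.\ convergent subsequence; routing through the homogeneous norm (rather than the nonhomogeneous modular) to remove the factor $1/2$, and invoking the subsequence--of--subsequence principle, are exactly the right moves. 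Your argument is essentially the variable-exponent adaptation of the Brezis--Lieb idea that underlies the Fan--Zhao proof, so you have effectively reconstructed the literature argument.
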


The variable exponent Sobolev space $W^{1,p(\cdot )}(\Omega )$ is defined by%
\begin{equation*}
W^{1,p(\cdot )}(\Omega )=\left\{ u\in L^{p(\cdot )}\left( \Omega \right)
;\ D u\in \lbrack L^{p(\cdot )}\left( \Omega \right)
]^{N} \right\} ,
\end{equation*}%
and it is equipped with the norm
\begin{equation*}
\left\Vert u\right\Vert _{W^{1,p(\cdot )}(\Omega )}=\left\vert u\right\vert
_{L^{p(\cdot )}(\Omega )}+\left\vert D u\right\vert _{L^{p(\cdot
)}(\Omega )},\quad\mbox{for all}\ u\in W^{1,p(\cdot )}\left( \Omega \right) .
\end{equation*}

We denote by $W_{0}^{1,p(\cdot )}(\Omega )$ the closure of $C_{0}^{\infty
}\left( \Omega \right) $ in $W^{1,p(\cdot )}\left( \Omega \right) $. The dual space of $W_0^{1,p(x)}(\Omega)$ is denoted by $W^{-1,p'(x)}(\Omega)$, where $p'(x)$ is the conjugate exponent of $p(x)$.

We recall that the critical Sobolev exponent is defined as follows:
 $$p^*(x)=
\left\{
\begin{array}{lll}
&\displaystyle\frac{Np(x)}{N-p(x)},&\quad\mbox{if}\ p(x)<N,\\
&\displaystyle+\infty, &\quad\mbox{if}\ p(x)\geq N.
\end{array}
\right.
$$

We point out that if
 $q \in C^+(\overline{\Omega})$ and $q(x) \leq p^*(x)$ for all $x\in\overline\Omega$, then $W^{1,p(\cdot )}(\Omega )$ is
continuously  embedded in $L^{q(\cdot )}(\Omega)$. This embedding is compact if $$\inf\{p^*(x)-q(x);\ x\in\Omega\}>0.$$

The  Lebesgue and Sobolev
spaces with variable exponents coincide with the usual Lebes\-gue and Sobolev spaces provided that $p$ is constant. According to \cite[pp. 8-9]{radrep}, these function spaces $L^{p(x)}$ and $W^{1,p(x)}$ have some unusual properties,
such as:

(i)
Assuming that $1<p^-\leq p^+<\infty$ and $p:\overline\Omega\rightarrow [1,\infty)$ is a smooth function, then the following co-area formula
$$\int_\Omega |u(x)|^pdx=p\int_0^\infty t^{p-1}\,|\{x\in\Omega ;\ |u(x)|>t\}|\,dt$$
has  no analogue in the framework of variable exponents.

(ii) Spaces $L^{p(x)}$ do {\it not} satisfy the {\it mean continuity property}. More exactly, if $p$ is nonconstant and continuous in an open ball $B$, then there is some $u\in L^{p(x)}(B)$ such that $u(x+h)\not\in L^{p(x)}(B)$ for every $h\in{\mathbb R}^N$ with arbitrary small norm.

(iii) Function spaces with variable exponent
 are {\it never} invariant with respect to translations.  The convolution is also limited. For instance,  the classical Young inequality
$$| f*g|_{p(x)}\leq C\, | f|_{p(x)}\, \| g\|_{L^1}$$
remains valid if and only if
$p$ is constant.

\subsection{A generalized operator with variable exponent}
Assume that $p\in C_+(\overline\Omega)$.
Kim and Kim \cite{kim} introduced an important class of nonhomogeneous operators with variable exponent. These are operators of the type
$${\rm div}\, (\phi(x,|Du|)Du),$$
where $\phi(x,\xi)$ satisfies the following hypotheses:

\smallskip
\noindent ($\phi$1) the mapping $\phi(\cdot,\xi)$ is measurable on $\Omega$ for all $\xi\geq 0$ and it is locally absolutely continuous on $[0,\infty)$ for almost all $x\in\Omega$;

\smallskip\noindent ($\phi$2) there exist  $a\in L^{p'(x)}(\Omega)$  and $b>0$ such that
$$|\phi (x,|v|)v|\leq a(x)+b|v|^{p(x)-1},$$
for almost all $x\in\Omega$ and for all $v\in\RR^N$;

\smallskip\noindent ($\phi$3) there exists $c>0$ such that
$$\phi(x,\xi)\geq c\xi^{p(x)-2},\quad \phi(x,\xi)+\xi\frac{\partial\phi}{\partial\xi}(x,\xi)\geq c\xi^{p(x)-2}$$
for almost all $x\in\Omega$ and for all $\xi>0$.



\smallskip
We now give some examples of potentials satisfying hypotheses ($\phi$1)--($\phi$3). If $\phi (x,\xi)=\xi^{p(x)-2}$ then we obtain the standard $p(x)$-Laplace operator, that is, $$\Delta_{p(x)}u:={\rm div}\, (|D  u|^{p(x)-2}D  u).$$
The abstract setting considered in this paper includes the case $\phi (x,\xi)=(1+|\xi|^2)^{(p(x)-2)/2}$, which corresponds to the generalized mean curvature operator
$${\rm div}\, \left[(1+|D  u|^2)^{(p(x)-2)/2}D  u \right].$$
The capillarity equation corresponds to
$$\phi(x,\xi)=\left(1+\frac{\xi^{p(x)}}{\sqrt{1+\xi^{2p(x)}}}\right)\xi^{p(x)-2},\quad x\in\Omega,\ \xi>0,$$ hence the corresponding capillary phenomenon is described by the differential operator
$${\rm div}\, \left[\left( 1+\frac{|D  u|^{p(x)}}{\sqrt{1+|D  u|^{2p(x)}}} \right)|D  u|^{p(x)-2}D  u\right].$$

\smallskip
For $\phi$  described in hypotheses ($\phi$1)--($\phi$3) we set
$$\Phi_0(x,t):=\int_0^t\phi (x,s)s ds$$
and define the functional $\Phi:W_0^{1,p(x)}(\Omega)\to\RR$ by
$$\Phi(u)=\intom \Phi_0(x,|Du(x)|)dx.$$

The following result was established in \cite[Lemma 3.2]{kim}.

\begin{proposition}\label{l3.2kim} Assume that ($\phi$1) and ($\phi$2) hold. Then $\Phi\in W_0^{1,p(x)}(\Omega)$ and its G\^ateaux derivative is given by
$$\langle \Phi'(u),v\rangle=\intom\phi(x,|Du(x)|)Du(x)Dv(x)dx\quad\mbox{for all}\ u,v\in W_0^{1,p(x)}(\Omega).$$
\end{proposition}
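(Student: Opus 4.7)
The plan is to show first that $\Phi$ is well-defined and finite on $\wpp$, and then to compute the G\^ateaux derivative directly from the difference quotient by passing the limit inside the integral via a dominated convergence argument.

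First, I would bound $\Phi_0$. Integrating hypothesis ($\phi$2) from $0$ to $t$ gives
$$\Phi_0(x,t)=\int_0^t\phi(x,s)s\,ds\leq a(x)t+\frac{b}{p(x)}\,t^{p(x)},$$
so by the H\"older inequality of Proposition \ref{prop2.1} (applied to the pair $a\in L^{p'(x)}$ and $|Du|\in L^{p(x)}$) and by the modular estimates of Proposition \ref{prop2.3}, $\Phi(u)$ is finite for every $u\in\wpp$. Introducing $\Psi_0(x,\xi):=\Phi_0(x,|\xi|)$ on $\Omega\times\RR^N$, a direct computation shows
$$\nabla_\xi\Psi_0(x,\xi)=\phi(x,|\xi|)\,\xi,$$
which is the candidate for the gradient appearing in the formula of the proposition.

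Next, I would compute the G\^ateaux derivative. For fixed $u,v\in\wpp$ and $t\in(-1,1)\setminus\{0\}$, by the fundamental theorem of calculus applied to $s\mapsto\Psi_0(x,Du+stDv)$,
$$\frac{\Phi_0(x,|Du+tDv|)-\Phi_0(x,|Du|)}{t}=\int_0^1\phi(x,|Du+stDv|)\,(Du+stDv)\cdot Dv\,ds.$$
As $t\to 0$ the integrand converges pointwise a.e.\ in $\Omega$ to $\phi(x,|Du|)\,Du\cdot Dv$. To pass the limit under the integral over $\Omega$, I invoke dominated convergence using ($\phi$2) to estimate
$$\bigl|\phi(x,|Du+stDv|)\,(Du+stDv)\bigr|\leq a(x)+b\bigl(|Du|+|Dv|\bigr)^{p(x)-1},$$
valid for $|t|\leq 1$ and $s\in[0,1]$. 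The right-hand side multiplied by $|Dv|$ belongs to $L^1(\Omega)$ by the H\"older inequality of Proposition \ref{prop2.1}, since $a\in L^{p'(x)}(\Omega)$ and $(|Du|+|Dv|)^{p(x)-1}\in L^{p'(x)}(\Omega)$ whenever $|Du|+|Dv|\in L^{p(x)}(\Omega)$, with the latter a consequence of Proposition \ref{prop2.3}.

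Applying dominated convergence first to the inner integral in $s$ and then, once the difference quotient is uniformly bounded in $x$ by the same $L^1(\Omega)$ majorant, to the outer integral in $x$, I obtain
$$\lim_{t\to 0}\frac{\Phi(u+tv)-\Phi(u)}{t}=\intom\phi(x,|Du|)\,Du\cdot Dv\,dx,$$
which is precisely the claimed formula. The main obstacle I anticipate is the bookkeeping of the variable-exponent H\"older estimate, namely verifying that $(|Du|+|Dv|)^{p(x)-1}\in L^{p'(x)}(\Omega)$; this is where Proposition \ref{prop2.3} is essential, because the usual constant-exponent identity $(p-1)p'=p$ must be read pointwise and combined with the modular-norm inequalities rather than a single-exponent bound.
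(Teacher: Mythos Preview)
The paper does not supply its own proof of this proposition; it merely records the statement and cites \cite[Lemma 3.2]{kim}. Your argument---bounding $\Phi_0$ via ($\phi$2) to show $\Phi$ is finite, writing the difference quotient as a mean-value integral, and passing to the limit by dominated convergence with the majorant $\bigl(a(x)+b(|Du|+|Dv|)^{p(x)-1}\bigr)|Dv|\in L^1(\Omega)$---is correct and is the standard route for results of this type, so it is almost certainly what appears in the cited reference as well. The only point worth flagging is cosmetic: the statement as printed contains the typo ``$\Phi\in W_0^{1,p(x)}(\Omega)$'', which should read that $\Phi$ is G\^ateaux differentiable on $W_0^{1,p(x)}(\Omega)$; your proof addresses exactly that corrected claim.
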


The following estimate plays a crucial role in order to establish that $\Phi'$ is an operator of type $(S)_+$. In the particular case when $p(x)$ is a constant, this result is usually referred as Simon's inequality \cite[formula 2.2]{simon} (see also \cite[p. 713]{fpr2008}).

Denote
$$\Omega_1=\{x\in\Omega;\ 1<p(x)<2\},\quad \Omega_2=\{x\in\Omega;\ p(x)\geq 2\}$$
(we allow the possibility that one of these sets is empty).

 The following result was established in
\cite[Proposition 3.3]{kim}.

\begin{proposition}\label{p3.3kim} Assume that hypotheses ($\phi$1)--($\phi$3) are fulfilled. Then the following estimate holds
$$\begin{array}{ll}
&\di \langle\phi(x,|u|)u-\phi(x,|v|)v,u-v\rangle\geq\\
&\di \left\{\begin{array}{lll}
&\di c(|u|+|v|)^{p(x)-2}|u-v|^2&\quad\mbox{if}\ x\in\Omega_1,\ (u,v)\not=(0,0)\\
&\di 4^{1-p^+}c|u-v|^{p(x)}&\quad\mbox{if}\ x\in\Omega_2,
\end{array}\right.
\end{array}$$
where $c$ is the positive constant from ($\phi$3).
\end{proposition}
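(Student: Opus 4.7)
The strategy is to freeze the point $x\in\Omega$, regard $p=p(x)$ as constant, and prove a pointwise inequality for the map $F:\RR^N\to\RR^N$ defined by $F(v):=\phi(x,|v|)v$. Note that $F$ is the gradient of the scalar potential $v\mapsto\Phi_0(x,|v|)$, and by $(\phi 1)$ it is absolutely continuous on line segments not meeting the origin, so one can differentiate it in the classical sense almost everywhere. An elementary computation gives
\[
\partial_j F_i(w)=\phi(x,|w|)\,\delta_{ij}+\frac{\partial\phi}{\partial\xi}(x,|w|)\,\frac{w_iw_j}{|w|},
\]
so that for any $\eta\in\RR^N$, splitting $\eta=\eta_\parallel+\eta_\perp$ with $\eta_\parallel$ collinear to $w$,
\[
\eta^{T}DF(w)\eta=\Bigl[\phi(x,|w|)+|w|\tfrac{\partial\phi}{\partial\xi}(x,|w|)\Bigr]|\eta_\parallel|^2+\phi(x,|w|)|\eta_\perp|^2.
\]
Hypothesis $(\phi 3)$ bounds both bracketed coefficients from below by $c|w|^{p(x)-2}$, yielding the key pointwise inequality
\[
\eta^{T}DF(w)\eta\;\geq\;c\,|w|^{p(x)-2}|\eta|^2.
\]

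Next, I apply the fundamental theorem of calculus along the segment $w_t:=v+t(u-v)$, $t\in[0,1]$:
\[
\langle F(u)-F(v),u-v\rangle=\int_{0}^{1}(u-v)^{T}DF(w_t)(u-v)\,dt\;\geq\;c\,|u-v|^{2}\int_{0}^{1}|w_t|^{p(x)-2}\,dt.
\]
Everything now reduces to an elementary one-variable estimate of $\int_{0}^{1}|w_t|^{p(x)-2}dt$ in the two regimes. If $x\in\Omega_1$, then $p(x)-2<0$, and since $|w_t|\leq(1-t)|v|+t|u|\leq|u|+|v|$, the integrand satisfies $|w_t|^{p(x)-2}\geq(|u|+|v|)^{p(x)-2}$ on $(0,1)$ (the endpoints at which $w_t=0$ form a measure-zero set and only strengthen the bound), so integration gives the first inequality.

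If $x\in\Omega_2$, the geometric obstacle is that $|w_t|^{p(x)-2}$ is increasing in $|w_t|$ and one must show $|w_t|$ is not too small on a set of positive measure. I exploit the elementary fact $\max(|u|,|v|)\geq|u-v|/2$ (which follows from $|u-v|\leq|u|+|v|$): assuming without loss of generality $|u|\geq|v|$, one has $|u|\geq|u-v|/2$, and for $t\in[3/4,1]$ the triangle inequality gives
\[
|w_t|\geq|u|-(1-t)|u-v|\geq\bigl(t-\tfrac12\bigr)|u-v|\geq\tfrac14|u-v|.
\]
Therefore
\[
\int_{0}^{1}|w_t|^{p(x)-2}dt\;\geq\;\int_{3/4}^{1}\!\bigl(\tfrac14|u-v|\bigr)^{p(x)-2}dt\;=\;\tfrac14\cdot 4^{\,2-p(x)}|u-v|^{p(x)-2}=4^{1-p(x)}|u-v|^{p(x)-2},
\]
and the uniform bound $4^{1-p(x)}\geq 4^{1-p^{+}}$ yields the second inequality.

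The main technical obstacle is the case $x\in\Omega_1$ when the segment $[v,u]$ passes through the origin (equivalently, $u$ is a negative multiple of $v$), because the lower bound on $DF$ then blows up. This can be circumvented either by noting that the lower bound $|w_t|^{p-2}\geq(|u|+|v|)^{p-2}$ already accommodates this blow-up with no change, or by a short limiting argument $v_n\to v$ with $u\notin(-\infty,0)v_n$, using the continuity of both sides of the desired inequality in $(u,v)\neq(0,0)$ provided by $(\phi 2)$.
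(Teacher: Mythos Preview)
The paper does not give its own proof of this proposition; it is merely quoted from \cite[Proposition~3.3]{kim}. So there is nothing in the paper to compare your argument against.

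Your proof is correct and follows the standard route for Simon-type inequalities: compute the Jacobian of $F(v)=\phi(x,|v|)v$, use $(\phi 3)$ to bound its quadratic form below by $c|w|^{p(x)-2}|\eta|^2$, integrate along the segment $w_t=(1-t)v+tu$, and then estimate $\int_0^1|w_t|^{p(x)-2}\,dt$ separately in the singular ($p(x)<2$) and degenerate ($p(x)\geq 2$) regimes. The $\Omega_1$ case is immediate from $|w_t|\leq|u|+|v|$; your treatment of the $\Omega_2$ case via the subinterval $[3/4,1]$ together with $\max(|u|,|v|)\geq|u-v|/2$ is clean and gives exactly the constant $4^{1-p^+}$ stated. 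Your final remark correctly disposes of the borderline situation where the segment meets the origin: in the $\Omega_1$ case the pointwise lower bound on $|w_t|^{p(x)-2}$ is only strengthened there (and the singularity is integrable since $p(x)-2>-1$), while the limiting argument via $(\phi 2)$ and continuity of $F$ away from the origin is also valid since in that configuration both $u$ and $v$ are nonzero.
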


For the following result we refer to \cite[Lemma 3.4]{kim}.

\begin{proposition}\label{l3.4kim} Assume that hypotheses ($\phi$1)--($\phi$3) are fulfilled. Then the operator $\Phi':W_0^{1,p(x)}(\Omega)\to W^{-1,p'(x)}(\Omega)$ is a strictly monotone mapping of type $(S)_+$, that is, if $$u_n\rightharpoonup u\ \mbox{in}\ W_0^{1,p(x)}(\Omega)\ \mbox{as}\ n\to\infty\ \mbox{and}\ \limsup_{n\to\infty}\langle \Phi'(u_n)-\Phi'(u),u_n-u\rangle\leq 0,$$ then $u_n\to u$ in $W_0^{1,p(x)}(\Omega)$ as $n\to\infty$.
\end{proposition}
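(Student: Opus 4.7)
The plan is to reduce the statement to the pointwise Simon-type estimate of Proposition \ref{p3.3kim} together with the modular-to-norm equivalences of Propositions \ref{prop2.3}--\ref{prop2.4}. Strict monotonicity is almost immediate: integrating the inequality of Proposition \ref{p3.3kim} applied pointwise to $(Du(x),Dv(x))$ over $\Omega=\Omega_1\cup\Omega_2$ yields
$$
\langle \Phi'(u)-\Phi'(v),u-v\rangle \;\geq\; c\int_{\Omega_1}(|Du|+|Dv|)^{p(x)-2}|D(u-v)|^{2}dx + 4^{1-p^{+}}c\int_{\Omega_2}|D(u-v)|^{p(x)}dx.
$$
The right-hand side is strictly positive unless $D(u-v)=0$ a.e.\ on $\Omega$, which by Poincar\'e forces $u=v$ in $\wpp$.

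For the $(S)_+$ property, assume $u_n\rightharpoonup u$ in $\wpp$ and set $\alpha_n:=\langle \Phi'(u_n)-\Phi'(u),u_n-u\rangle$. Strict monotonicity gives $\alpha_n\geq 0$, so the hypothesis $\limsup \alpha_n\leq 0$ upgrades to $\alpha_n\to 0$. Applying the displayed inequality with $v:=u$ and $u:=u_n$ forces simultaneously
$$
I_n^{(2)}:=\int_{\Omega_2}|D(u_n-u)|^{p(x)}dx\longrightarrow 0, \qquad J_n:=\int_{\Omega_1}(|Du_n|+|Du|)^{p(x)-2}|D(u_n-u)|^{2}dx\longrightarrow 0.
$$

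The decisive step, which I expect to be the main obstacle, is to convert $J_n\to 0$ into vanishing of the $p(x)$-modular $I_n^{(1)}:=\int_{\Omega_1}|D(u_n-u)|^{p(x)}dx$ on $\Omega_1$ (where $1<p(x)<2$, so $p(x)-2<0$ and the weight $(|Du_n|+|Du|)^{p(x)-2}$ may be large). For this I would use the pointwise identity
$$
|D(u_n-u)|^{p(x)}=\bigl[(|Du_n|+|Du|)^{p(x)-2}|D(u_n-u)|^{2}\bigr]^{p(x)/2}\cdot(|Du_n|+|Du|)^{p(x)(2-p(x))/2},
$$
and apply the variable-exponent H\"older inequality of Proposition \ref{prop2.1} on $\Omega_1$ with conjugate exponents $2/p(x)$ and $2/(2-p(x))$. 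The first factor has $L^{2/p(x)}$-modular equal to $J_n\to 0$, so its Luxemburg norm tends to zero by Proposition \ref{prop2.3}; the second factor is dominated by the $p(x)$-modular of $|Du_n|+|Du|$, which remains bounded since $\{u_n\}$ is weakly convergent, hence norm-bounded in $\wpp$. Thus $I_n^{(1)}\to 0$.

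Combining, $I_n^{(1)}+I_n^{(2)}\to 0$ means $\rho_{p(\cdot)}(D(u_n-u))\to 0$, whence Proposition \ref{prop2.4} gives $|D(u_n-u)|_{L^{p(\cdot)}}\to 0$, and Poincar\'e promotes this to $u_n\to u$ in $\wpp$. The careful bookkeeping in the H\"older step on $\Omega_1$ --- in particular the passage between modular and Luxemburg norms when $p(x)$ is variable and may approach the endpoints $1$ or $2$ --- is where the proof requires genuine attention; everything else is either monotonicity, an application of Proposition \ref{p3.3kim}, or one of the standard equivalences of Section \ref{sect2}.
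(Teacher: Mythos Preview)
The paper does not actually prove this proposition; it merely records the statement and refers to \cite[Lemma 3.4]{kim} for the argument. Your outline is precisely the standard proof one finds there (and in the earlier literature on the constant-exponent case going back to Simon): integrate the pointwise inequality of Proposition~\ref{p3.3kim} to obtain strict monotonicity, upgrade $\limsup\alpha_n\le 0$ to $\alpha_n\to 0$, split $\Omega=\Omega_1\cup\Omega_2$, and on $\Omega_1$ pass from the weighted quantity $J_n$ to the modular $I_n^{(1)}$ via the factorization together with the variable-exponent H\"older inequality with conjugate pair $2/p(x),\,2/(2-p(x))$.

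Your identification of the one genuinely delicate point is accurate: when $p(x)$ is allowed to approach $2$ on $\Omega_1$, the conjugate exponent $2/(2-p(x))$ is unbounded, so Proposition~\ref{prop2.1} cannot be applied verbatim. The usual remedy---and presumably what \cite{kim} does---is either to assume $p^+<2$ on $\Omega_1$ (so the exponents stay in $C_+(\overline{\Omega_1})$), or to bypass H\"older altogether by using Young's inequality pointwise, writing $|D(u_n-u)|^{p(x)}\le \varepsilon(|Du_n|+|Du|)^{p(x)}+C(\varepsilon)\bigl[(|Du_n|+|Du|)^{p(x)-2}|D(u_n-u)|^{2}\bigr]$ on $\Omega_1$, which is uniform in $x$ and avoids any variable-exponent duality. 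Apart from this caveat, your argument is complete and matches the cited source.
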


We refer to Baraket,  Chebbi,  Chorfi, and  R\u adulescu \cite{chorfi} for the mathematical analysis of a nonlinear Dirichlet problem driven by operators as described in this section.

\section{Hypotheses and main results}\label{sect3}
We study of the following nonlinear problem
\begin{equation}\label{1}
\left\{\begin{array}{lll}
& -{\rm div}\, (\phi(x,|D  u|)D  u)-{\rm div}\, (\psi(x,|D  u|)D  u)+w(x)\theta(x,|u|)u=\\
&\lambda\,(|u|^{r-2}u+|u|^{s-2}u)\quad\mbox{in}\ \Omega\\
& u=0\quad \mbox{on}\ \partial\Omega\,,
\end{array}\right.
\end{equation}
where $\Omega$ is a smooth bounded domain in $\RR^N$ ($N\geq 2$), $w$ is an indefinite potential, and $\lambda$ is a  real parameter.

We assume that $\phi$, $\psi$ satisfy the following hypotheses:

\smallskip
\noindent (H1) the mappings $\phi(\cdot,\xi)$, $\psi(\cdot,\xi)$, and $\theta(\cdot,\xi)$ are measurable on $\Omega$ for all $\xi\geq 0$ and they are locally absolutely continuous on $[0,\infty)$ for almost all $x\in\Omega$;

\smallskip\noindent (H2) there exist  $a_1\in L^{p_1'(x)}(\Omega)$, $a_2\in L^{p_2'(x)}(\Omega)$,   and $b>0$ such that
for almost all $x\in\Omega$ and for all $v\in\RR^N$
$$|\phi (x,|v|)v|\leq a_1(x)+b|v|^{p_1(x)-1},$$
$$|\psi (x,|v|)v|\leq a_2(x)+b|v|^{p_2(x)-1}$$
and
$$|\theta (x,|v|)v|\leq b|v|^{p_3(x)-1};$$

\smallskip\noindent (H3) there exists $c>0$ such that for almost all $x\in\Omega$ and for all $\xi>0$
$$\min\left\{\phi(x,\xi), \phi(x,\xi)+\xi\frac{\partial\phi}{\partial\xi}(x,\xi)\right\}\geq c\xi^{p_1(x)-2},$$
$$\min\left\{\psi(x,\xi), \psi(x,\xi)+\xi\frac{\partial\psi}{\partial\xi}(x,\xi)\right\}\geq c\xi^{p_2(x)-2}$$
and
$$\theta(x,\xi)\geq c\xi^{p_3(x)-2}.$$

Throughout this paper, we assume that $r$ and $s$ are real numbers while $p_1$, $p_2$, and $p_3$ are continuous on $\overline\Omega$ such that
\begin{equation}\label{2}
1<\max_{x\in\overline\Omega}p_1(x)<r\leq\min_{x\in\overline\Omega}p_3(x)\leq
\max_{x\in\overline\Omega}p_3(x)
\leq s<\min_{x\in\overline\Omega}p_2(x).
\end{equation}
We work in a subcritical setting, which is defined by the hypothesis
\begin{equation}\label{3}
\min_{x\in\overline\Omega}\{ p_1^*(x)-p_2(x)\}>0,
\end{equation}
where
$$p_1^*(x):=\left\{\begin{array}{lll}
&\di \frac{Np_1(x)}{N-p_1(x)}\qquad &\mbox{if $p_1(x)<N$}\\
& +\infty\qquad &\mbox{if $p_1(x)\geq N$.}\end{array}\right.$$

We also assume that the weight $w$ is indefinite (that is, sign-changing) and  $w\in L^\infty(\Omega)$.

In accordance with the comments developed in the previous section, we introduce the functions
$$\Phi_0(x,t):=\int_0^t\phi (x,s)s ds;\quad \Psi_0(x,t):=\int_0^t\psi (x,s)s ds;\quad \Theta_0(x,t):=\int_0^t\theta (x,s)s ds$$
and define the functionals $$\Phi:W_0^{1,p_1(x)}(\Omega)\to\RR;\quad
\Phi(u)=\intom \Phi_0(x,|Du(x)|)dx$$
$$\Psi:W_0^{1,p_2(x)}(\Omega)\to\RR;\quad
\Psi(u)=\intom \Psi_0(x,|Du(x)|)dx$$
and
$$\Theta:W_0^{1,p_3(x)}(\Omega)\to\RR;\quad
\Theta(u)=\intom w(x)\Theta_0(x,|u(x)|)dx.$$

An important role in the proof of our main result is played by the following assumption, which is also
used in \cite{kim} for proving the existence of weak solutions in a different framework:

\smallskip\noindent (H4) For all $x\in\overline\Omega$ and all $\xi\in\RR^N$, the following estimate holds:
$$0\leq[\phi(x,|\xi|)+\psi(x,|\xi|)]\,|\xi|^2\leq p_1^+[\Phi_0(x,|\xi|)+\Psi_0(x,|\xi|)].$$

Taking into account the growth of the potentials $\phi$, $\psi$ and $\theta$ defined in hypothesis \eqref{2}, it follows that the natural function space for the existence of all energies $\Phi$, $\Psi$ and $\Theta$ is $\sob$.

\begin{definition}\label{defi1}
We say that $u\in\sob\setminus\{0\}$ is a solution of problem \eqref{1} if
$$\begin{array}{ll}
&\di\intom \left[\phi(x,|D  u|)+\psi(x,|D  u|) \right]DuDvdx+\intom w(x)\theta(x,|  u|)uvdx=\\ &\di
\lambda\intom (|u|^{r-2}+|u|^{s-2})uvdx\end{array}$$
for all $v\in\sob$.
\end{definition}

We will say that the corresponding real number $\lambda$  for which problem \eqref{1} has a nontrivial solution is an {\it eigenvalue}  while the corresponding $u\in\sob\setminus\{0\}$ is an {\it eigenfunction} of the problem. These terms are in accordance with the related notions introduced by Fu\v{c}ik, Ne\v{c}as, Sou\v{c}ek, and Sou\v{c}ek \cite[p. 117]{fucik} in the context of {\it nonlinear} operators. Indeed, if we denote $$A(u):=\Phi(u)+\Psi(u)+\Theta(u)\quad \mbox{and}\quad
B(u):= \intom\left(\frac{|u|^r}{r}+\frac{|u|^s}{s} \right)dx$$
then $\lambda$ is an eigenvalue for the pair $(A,B)$ of nonlinear operators (in the sense of \cite{fucik}) if and only if
there is a corresponding eigenfunction that is a solution of problem \eqref{1} as described in Definition \ref{defi1}.

 The energy functional associated to problem \eqref{1} is $\ee:\sob\to\RR$ defined by
$$\ee(u):=\Phi(u)+\Psi(u)+\Theta(u)-\lambda\intom\left(\frac{|u|^r}{r}+\frac{|u|^s}{s} \right)dx.$$

The contribution of the convection term $Du$ in the expression of the energy is given by
$$\Phi(u)+\Psi(u)=\intom \left( \Phi_0(x,|Du(x)|)+\Psi_0(x,|Du(x)|)\right)dx,$$
which corresponds to a {\it nonhomogeneous double phase problem}.

We describe in what follows the main result of this paper.
We first point out that hypothesis \eqref{2} implies that
 $\ee$ is coercive. Furthermore,
 the Rayleigh quotient associated to problem \eqref{1} has a blow-up behavior both at the origin and at infinity. More exactly, we have
$$\lim_{\|u\|_{p_2(x)}\rightarrow 0}\di\frac{\Phi(u)+\Psi(u)+\Theta(u)}{\di\frac 1r\int_\Omega|u|^{r}\;dx+
\di\frac 1s\int_\Omega|u|^{s}\;dx}=+\infty$$ and
$$\lim_{\|u\|_{p_2(x)}\rightarrow\infty}\di\frac{\Phi(u)+\Psi(u)+\Theta(u)}{\di\frac 1r\int_\Omega|u|^{r}\;dx+
\di\frac 1s\int_\Omega|u|^{s}\;dx}=+\infty\,,$$ where
$\|\cdot\|_{p_2(x)}$ denotes the norm in the  space $W_0^{1,p_2(x)}(\Omega)$.

Roughly speaking, these remarks show that the infimum of this (first) Rayleigh quotient associated to problem \eqref{1} is finite, that is,
$$\lambda^*:=\inf_{u\in\sob\setminus\{0\}}
\frac{\Phi(u)+\Psi(u)+\Theta(u)}{\di\frac 1r\int_\Omega|u|^{r}\;dx+
\di\frac 1s\int_\Omega|u|^{s}\;dx}\in\RR.$$

Our main result shows that problem \eqref{1} has a solution provided that $\lambda$ is at least $\lambda^*$. This corresponds to {\it high perturbations} (with respect to $\lambda$) of the reaction term in problem \eqref{1}.

In view of the non-homogeneous character of problem \eqref{1}, there is a second Rayleigh quotient, namely
$$\frac{\di \intom (\phi(x,|Du|)+\psi(x,|Du|))|Du|^2dx+\intom w(x)\theta (x,|u|)u^2dx}{\di\intom (|u|^r+|u|^s)dx}\,.$$

Let $\lambda_*$ denote the second critical parameter that corresponds to the infimum of
this new Rayleigh quotient, that is,
$$\lambda_*:=\inf_{u\in\sob\setminus\{0\}}\frac{\di \intom (\phi(x,|Du|)+\psi(x,|Du|))|Du|^2dx+\intom w(x)\theta (x,|u|)u^2dx}{\di\intom (|u|^r+|u|^s)dx}\,.$$

The existence of two Rayleigh quotients (and, consequently, of two different Dirichlet-type energies) is due to the presence of the nonstandard potential
$\phi(x,\xi)$. In the simplest case corresponding to $\phi(x,\xi)=\xi^{p(x)-2}$ the corresponding energies are $$\intom |Du|^{p(x)}dx\quad\mbox{and}\quad\intom\frac{1}{p(x)}\,|Du|^{p(x)}dx$$
with associated Euler equations
$${\rm div}\, (p(x)|Du|^{p(x)-2}Du)=0$$
and
$${\rm div}\, (|Du|^{p(x)-2}Du)=0.$$

Our main result also shows that problem \eqref{1} does not have solutions for the values of $\lambda$ less than $\lambda_*$. These properties are described in the following theorem, which asserts the existence of a continuous family of eigenvalues starting with the principal eigenvalue $\lambda^*$, while no eigenvalues exist below $\lambda_*$.

\begin{theorem}\label{t1}
Assume that hypotheses (H1)--(H4), \eqref{2} and \eqref{3} are satisfied. Then the following properties hold:

(i) $\lambda^*$ is an eigenvalue of problem \eqref{1};

(ii) all values $\lambda\geq\lambda^*$ are eigenvalues of problem \eqref{1};

(iii) any $\lambda<\lambda_*$ is not an eigenvalue of problem \eqref{1}.
\end{theorem}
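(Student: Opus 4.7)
The strategy is to prove (iii) directly from the weak formulation and to obtain (i) and (ii) via direct minimization: (ii) by minimizing the full energy functional $\ee$, and (i) by minimizing the first Rayleigh quotient that defines $\lambda^*$. Part (iii) is essentially immediate: if $u\in\sob\setminus\{0\}$ solves \eqref{1} with parameter $\lambda$, then taking $v=u$ in Definition \ref{defi1} reproduces the numerator of the second Rayleigh quotient on the left and $\lambda$ times its denominator on the right, so $\lambda\geq\lambda_*$ by the very definition of $\lambda_*$.

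For (i) and (ii) I first establish coercivity and sequential weak lower semicontinuity of $\ee$ on $\sob$. Coercivity follows from the lower bound $\Phi(u)+\Psi(u)\geq c\intom\frac{1}{p_2(x)}|Du|^{p_2(x)}dx$ (a consequence of (H3)) together with $|\Theta(u)|\leq C\intom|u|^{p_3(x)}dx$ and the inequalities $r,s,p_3^+<p_2^-$ read off from \eqref{2}, applied through Proposition \ref{prop2.3} and the compact embedding $\sob\hookrightarrow L^{q(x)}(\Omega)$ that holds whenever $q^+<p_2^-\leq p_2^*(x)$. Weak lower semicontinuity of $\Phi+\Psi$ is standard because (H3) forces $\Phi_0(x,|\cdot|)$ and $\Psi_0(x,|\cdot|)$ to be convex in the gradient variable, while $\Theta$ and the $L^r$, $L^s$ terms are sequentially weakly continuous by the compact embedding.

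For (ii), fix $\lambda>\lambda^*$. By the very definition of $\lambda^*$ there exists $v\in\sob\setminus\{0\}$ with $\ee(v)<0=\ee(0)$, so the infimum of $\ee$ is strictly negative; it is attained (coercivity plus weak lower semicontinuity) at some $u_\lambda$, which is automatically nontrivial and, being a critical point, solves \eqref{1}. For (i), take a minimizing sequence $\{u_n\}$ for the first Rayleigh quotient $\rho$; the two blow-up relations recorded just before the theorem force $\|u_n\|_{p_2(x)}$ to remain bounded and bounded away from $0$. Passing to a subsequence, $u_n\rightharpoonup u^*$ in $\sob$ with strong convergence in $L^r(\Omega)$, $L^s(\Omega)$ and $L^{p_3(x)}(\Omega)$. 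One must then check $u^*\neq 0$: otherwise the denominator of $\rho(u_n)$ tends to zero, while Proposition \ref{prop2.3} and (H3) keep $\Phi(u_n)+\Psi(u_n)$ bounded below by a positive constant (since $\|u_n\|_{p_2(x)}\geq\delta>0$) and $\Theta(u_n)\to 0$, which would force $\rho(u_n)\to+\infty$, a contradiction. Weak lower semicontinuity of the numerator together with strong continuity of the denominator then give $\rho(u^*)=\lambda^*$; differentiating $t\mapsto\rho(u^*+tv)$ at $t=0$ yields the Lagrange multiplier identity, which is exactly the weak formulation of \eqref{1} with $\lambda=\lambda^*$.

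The main obstacle is verifying $u^*\neq 0$ in part (i). In the homogeneous $p$-Laplacian setting one can normalize the denominator to $1$ using the zero-homogeneity of the quotient, but here the competing powers $r$, $s$ and the unbalanced growth $p_1(x)\neq p_2(x)$ destroy homogeneity, so both blow-up relations must be used simultaneously; it is essential that $p_3^+\leq s<p_2^-$ so that the indefinite term $\Theta$ cannot cancel the lower bound on $\Phi+\Psi$.
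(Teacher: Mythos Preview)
Your proposal is correct and follows essentially the same route as the paper: Lemma~\ref{lema4} for (iii), direct minimization of $\ee$ for (ii) as in Lemma~\ref{lema3}, and minimization of the first Rayleigh quotient for (i) as in Lemma~\ref{lema2}, with the blow-up relations (your coercivity discussion is the content of Lemma~\ref{lema1}) supplying boundedness of the minimizing sequence. The only stylistic difference is in the verification that the weak limit $u^*$ in (i) is nonzero: the paper argues that $u^*=0$ forces $\ee_0(u_n)\to 0$, hence $u_n\to 0$ strongly in $\sob$, and then invokes the blow-up at the origin; you instead use directly that $\|u_n\|\geq\delta>0$ together with (H3) keeps $\Phi(u_n)+\Psi(u_n)$ bounded below by a positive constant while $\Theta(u_n)\to 0$ and $\ee_2(u_n)\to 0$, which is slightly more direct but equivalent.
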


This result shows that $\lambda^*$ plays an important role in the context of problem \eqref{1}. Next,
we focus on $\lambda^*$ and we look it as a quantity depending on the weight $w\in L^\infty(\Omega)$, hence
$$\lambda^*(w):=\inf_{u\in\sob}
\frac{\di\intom \Phi_0(x,|Du(x)|)dx+\intom \Psi_0(x,|Du(x)|)dx+\intom w(x)\Theta_0(x,|u(x)|)dx}{\di\frac 1r\int_\Omega|u|^{r}\;dx+
\di\frac 1s\int_\Omega|u|^{s}\;dx}.$$

We refer to Colasuonno and Squassina \cite{cola} who also studied eigenvalues of double phase problems and established the existence of  a sequence of nonlinear eigenvalues by a minimax procedure. 

Next, we are interested in the following optimization problem: is there $w_0\in L^\infty(\Omega)$ such that
$$\lambda^*(w_0)=\inf_{w\in L^\infty(\Omega)}\lambda^*(w)\ ?$$

The following result gives a positive answer to this question in the framework of bounded closed subsets of $L^\infty(\Omega)$.

\begin{theorem}\label{t2}
Assume that hypotheses (H1)--(H4), \eqref{2} and \eqref{3} are verified.
Let $\WW$ be a nonempty, bounded, closed subset of $L^\infty(\Omega)$.

Then there exists $w_0\in\WW$ such that
 $$\lambda^*(w_0)=\inf_{w\in \WW}\lambda^*(w).$$
\end{theorem}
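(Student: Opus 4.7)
The plan is to apply the direct method of the calculus of variations to the map $w\mapsto \lambda^*(w)$ on $\WW$. The central structural observation is that for each fixed $u\in \sob\setminus\{0\}$, the quotient
$$G(w,u):=\frac{\Phi(u)+\Psi(u)+\intom w(x)\Theta_0(x,|u(x)|)\,dx}{\tfrac{1}{r}\intom|u|^r\,dx+\tfrac{1}{s}\intom|u|^s\,dx}$$
is affine and strongly continuous in $w\in L^\infty(\Omega)$; hence $\lambda^*(w)=\inf_u G(w,u)$ is concave and upper semicontinuous on $L^\infty(\Omega)$, which is the basic input.

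First I would establish two uniform estimates at once. By (H2), $\Theta_0(x,|u|)\leq \tfrac{b}{p_3^-}|u|^{p_3(x)}$, and since \eqref{2} forces $r\leq p_3(x)\leq s$, we have $|u|^{p_3(x)}\leq|u|^r+|u|^s$ pointwise, so
$$\intom \Theta_0(x,|u|)\,dx\leq C\left(\tfrac{1}{r}\intom|u|^r\,dx+\tfrac{1}{s}\intom|u|^s\,dx\right)$$
for an absolute constant $C>0$. Since $\|w\|_\infty\leq M$ for every $w\in\WW$ (boundedness of $\WW$), this produces the uniform lower bound $\lambda^*(w)\geq -CM$, so that $m:=\inf_{w\in\WW}\lambda^*(w)\in\RR$. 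The same estimate applied to $G(w_1,u)-G(w_2,u)$ gives, uniformly in $u$, the Lipschitz bound $|\lambda^*(w_1)-\lambda^*(w_2)|\leq C\,\|w_1-w_2\|_\infty$ on all of $L^\infty(\Omega)$.

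Next, choose a minimizing sequence $(w_n)\subset\WW$ with $\lambda^*(w_n)\to m$. Because $\WW$ is bounded in $L^\infty(\Omega)=L^1(\Omega)^*$ and $L^1(\Omega)$ is separable, the Banach--Alaoglu theorem yields a subsequence (not relabelled) with $w_n\overset{*}{\rightharpoonup} w_0$ in $L^\infty(\Omega)$ for some $w_0\in L^\infty(\Omega)$. For each fixed $u\in \sob$ one has $\Theta_0(\cdot,|u|)\in L^1(\Omega)$, so $G(w_n,u)\to G(w_0,u)$; combined with $\lambda^*(w_n)\leq G(w_n,u)$, letting $n\to\infty$ yields $m\leq G(w_0,u)$ for every $u$, and taking the infimum in $u$ gives $m\leq \lambda^*(w_0)$.

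The principal delicate point is the verification that $w_0\in\WW$: strong closedness of $\WW$ alone is weaker than weak-$*$ closedness in the non-reflexive space $L^\infty(\Omega)$, and this is the step I expect to demand the most care. The cleanest way is either to interpret ``closed'' in the weak-$*$ sense (as is customary in optimization problems of this type), or to exploit some extra structural hypothesis on $\WW$ under which norm-closure and weak-$*$-closure coincide (for instance, convexity together with a uniform pointwise envelope, or an obstacle description $\WW=\{w\in L^\infty:|w|\leq h\text{ a.e.}\}$). Once $w_0\in\WW$ is secured, the trivial inequality $\lambda^*(w_0)\geq m$ combined with the bound $m\leq\lambda^*(w_0)$ from the previous step yields $\lambda^*(w_0)=m$, so the infimum is attained at $w_0$.
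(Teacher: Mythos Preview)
Your argument is correct and is genuinely more elementary than the paper's. The key observation you make---that for each fixed $u$ the map $w\mapsto G(w,u)$ is weak-$*$ continuous on $L^\infty(\Omega)$ because $\Theta_0(\cdot,|u|)\in L^1(\Omega)$---immediately yields weak-$*$ upper semicontinuity of $\lambda^*(w)=\inf_u G(w,u)$; testing at a \emph{fixed} $u$ and passing to the limit then gives $m\le\lambda^*(w_0)$ directly. The paper instead takes a longer route: it invokes Theorem~\ref{t1} to produce minimizers $u_n\in\sob\setminus\{0\}$ with $\ee_1^{w_n}(u_n)/\ee_2(u_n)=\lambda^*(w_n)$, argues that $(u_n)$ is bounded in $\sob$ and does not converge to $0$ (so that $\ee_2(u_n)$ stays bounded away from zero), and then establishes three separate convergence statements in order to show that $[\Theta^{w_0}(u_n)-\Theta^{w_n}(u_n)]/\ee_2(u_n)\to 0$, finally using the moving $u_n$ as test functions in the definition of $\lambda^*(w_0)$. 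Your approach bypasses Theorem~\ref{t1} and the entire analysis of the sequence $(u_n)$, and the paper's extra machinery does not buy anything additional for this particular statement. (Your Lipschitz estimate is correct but not actually needed; the weak-$*$ argument is what does the work.)

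On the point you flag as delicate, namely $w_0\in\WW$: the paper simply asserts that the weak-$*$ limit lies in $\WW$, so it is implicitly reading ``closed'' as weak-$*$ closed. Your concern is legitimate---a norm-closed bounded subset of the non-reflexive space $L^\infty(\Omega)$ need not be weak-$*$ closed---but you have identified precisely the same gap that the paper's own proof leaves open, and your suggested remedies (interpret ``closed'' in the weak-$*$ sense, or assume convexity/an obstacle description) are the natural ones.
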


The proof of Theorem \ref{t1} is given in section \ref{sect4} while section \ref{sect5}
is devoted to the proof of Theorem~\ref{t2}. Concluding remarks and some perspectives are given in the final part of this paper.

\section{Existence of an unbounded interval of eigenvalues}\label{sect4}
In this section we give the proof of Theorem \ref{t1}.

We denote
$$\ee_0(u):=\Phi(u)+\Psi(u);\quad
\ee_1(u):=\ee_0(u)+\Theta(u);\quad
\ee_2(u):=\intom\left(\frac{|u|^r}{r}+\frac{|u|^s}{s}\right)dx .$$
It follows that $\ee(u)=\ee_1(u)+\ee_2(u)$.

We denote by $\|\,\cdot\,\|$ the norm in the Banach space $\sob$.

The first result establishes a coercivity behavior of the energy $\ee_1$ with respect to $\ee_2$, both near the origin and at infinity.

\begin{lemma}\label{lema1}
We have $$\lim_{\|u\|\to 0}\frac{\ee_1(u)}{\ee_2(u)}=\lim_{\|u\|\to \infty}\frac{\ee_1(u)}{\ee_2(u)}=+\infty.$$
\end{lemma}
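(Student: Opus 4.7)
The plan is to first absorb the indefinite potential term $\Theta$ into $\ee_2$, so that the quotient is controlled by $(\Phi(u)+\Psi(u))/\ee_2(u)$, and then attack the two limits using different lower bounds. For the absorption step, hypothesis (H2) yields $\Theta_0(x,t)\leq (b/p_3^-)\,t^{p_3(x)}$, and the ordering $r\leq p_3(x)\leq s$ on $\overline\Omega$ forces $t^{p_3(x)}\leq t^r+t^s$ for every $t\geq 0$. Combined with $w\in L^\infty(\Omega)$ this produces a constant $C_0>0$ with $|\Theta(u)|\leq C_0\,\ee_2(u)$, so
$$\frac{\ee_1(u)}{\ee_2(u)}\geq \frac{\Phi(u)+\Psi(u)}{\ee_2(u)}-C_0,$$
and it suffices to prove that the right-hand side blows up in both regimes. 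Integrating (H3) provides $\Phi(u)\geq (c/p_1^+)\rho_{p_1(\cdot)}(Du)$ and $\Psi(u)\geq (c/p_2^+)\rho_{p_2(\cdot)}(Du)$; we work with the equivalent norm $\|u\|=|Du|_{p_2(\cdot)}$ on $\sob$.

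For the limit $\|u\|\to\infty$, Proposition \ref{prop2.3}(ii) gives $\rho_{p_2(\cdot)}(Du)\geq\|u\|^{p_2^-}$ once $\|u\|>1$, so $\Psi(u)\geq C_1\|u\|^{p_2^-}$. The continuous Sobolev embeddings $\sob\hookrightarrow L^r(\Omega)\cap L^s(\Omega)$ (valid since $r\leq s<p_2^-\leq p_2^*$) give $\ee_2(u)\leq 2C_2\|u\|^s$ for $\|u\|\geq 1$. Since $p_2^->s$ by \eqref{2}, one obtains $(\Phi(u)+\Psi(u))/\ee_2(u)\geq C_3\|u\|^{p_2^--s}\to+\infty$.

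For the limit $\|u\|\to 0$ the bound coming from $\Psi$ is too weak (it would produce the wrong exponent $p_2^+>r$); we switch to $\Phi$. Since $p_1(x)<p_2(x)$ on $\overline\Omega$ and $\Omega$ is bounded, $|Du|_{p_1(\cdot)}\leq C_4\|u\|\to 0$. Proposition \ref{prop2.3}(ii) then yields $\rho_{p_1(\cdot)}(Du)\geq|Du|_{p_1(\cdot)}^{p_1^+}$, whence $\Phi(u)\geq C_5|Du|_{p_1(\cdot)}^{p_1^+}$. Hypothesis \eqref{3} ensures the continuous embedding $W_0^{1,p_1(x)}(\Omega)\hookrightarrow L^r(\Omega)\cap L^s(\Omega)$ (since $r\leq s<p_2^-<p_1^*$), so $\ee_2(u)\leq 2C_6|Du|_{p_1(\cdot)}^r$ once $|Du|_{p_1(\cdot)}<1$. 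Combining these estimates,
$$\frac{\Phi(u)+\Psi(u)}{\ee_2(u)}\geq C_7\,|Du|_{p_1(\cdot)}^{p_1^+-r}\longrightarrow+\infty$$
because $p_1^+<r$ by \eqref{2}.

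The main obstacle is precisely the sign-changing weight $w$: without the bracketing $r\leq p_3\leq s$ one could not trap $|\Theta(u)|$ inside a multiple of $\ee_2(u)$, and the indefinite term could in principle destroy coercivity. Once this absorption is in place, the two regimes are routine, provided one selects the correct exponent in each case ($p_2$ at infinity, $p_1$ near the origin) and uses the matching Proposition \ref{prop2.3} modular estimate together with the Sobolev embedding made available by \eqref{3}.
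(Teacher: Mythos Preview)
Your proof is correct and follows essentially the same strategy as the paper's: absorb the indefinite term $\Theta$ into a multiple of $\ee_2$ via (H2) and the bracketing $r\leq p_3(x)\leq s$, bound $\Phi+\Psi$ from below using (H3), and then treat the two regimes with the appropriate modular estimate from Proposition~\ref{prop2.3}. The paper writes the $\Theta$-bound as $\Theta(u)\geq -C_3(\|u\|^r+\|u\|^s)$ and the Sobolev control of $\ee_2$ as $\ee_2(u)\leq C_1(\|u\|^r+\|u\|^s)$, but this is the same mechanism you use with $|\Theta(u)|\leq C_0\ee_2(u)$.

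The one noteworthy difference is in the regime $\|u\|\to 0$. The paper keeps everything in the $W_0^{1,p_2(x)}$-norm and asserts $\intom(|Du|^{p_1(x)}+|Du|^{p_2(x)})dx\geq C\|u\|^{p_1^+}$, a step it does not justify and which is not immediate (the natural embeddings go the wrong way). You avoid this by passing to the auxiliary quantity $|Du|_{p_1(\cdot)}$: you control both $\Phi(u)$ from below and $\ee_2(u)$ from above in terms of this same norm (the latter via the embedding $W_0^{1,p_1(x)}(\Omega)\hookrightarrow L^r\cap L^s$ guaranteed by \eqref{3}), and then use $|Du|_{p_1(\cdot)}\leq C\|u\|\to 0$ to conclude. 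This is cleaner and fully rigorous; it is the right way to execute the paper's intended argument near the origin.
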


\begin{proof}
By \eqref{2}, we deduce that
$$\ee_2(u)\leq\intom (|u|^r+|u|^s)dx.$$
By Sobolev embeddings we have
$$\ee_2(u)\leq C_1(\|u\|^{r}+\|u\|^{s})\quad\mbox{for all}\ u\in\sob.$$

Next, we obtain by (H3)
$$\ee_0(u)\geq \frac{c}{p_1^+}\intom |Du|^{p_1(x)}dx+\frac{c}{p_1^+}\intom |Du|^{p_1(x)}dx=
C_2\intom (|Du|^{p_1(x)}+|Du|^{p_2(x)})dx.$$

We now estimate $\Theta(u)$ bu using hypotheses (H2) and \eqref{2}. We obtain
$$\begin{array}{ll}
\Theta(u)&\di =\intom w(x)\Theta_0(x,|u(x)|)dx=\intom w(x)\int_0^{|u(x)|}\theta (x,s)sds\\
&\geq -\|w\|_\infty\,\frac{b}{p_3^-}\intom |u|^{p_3(x)}dx \\
&\geq -C_3(\|u\|^r+\|u\|^s ).\end{array}$$
It follows that
\bb\label{above}\frac{\ee_1(u)}{\ee_2(u)}\geq\frac{\di C_2\intom (|Du|^{p_1(x)}+|Du|^{p_2(x)})dx}{C_1(\|u\|^{r}+\|u\|^{s})}-C_4.\bbb

Let us first assume that $(u_n)\subset\sob$ and $\|u_n\|\to 0$. Assuming that $\|u_n\|<1$ for all $n$, relation \eqref{above} implies that for all $n$
$$\frac{\ee_1(u_n)}{\ee_2(u_n)}\geq\frac{C_2\|u_n\|^{p_1^+}}{C_1(\|u_n\|^{r}+\|u_n\|^{s})}-C_4.$$
Since $p_1^+<r\leq s$ and $\|u_n\|\to 0$ we deduce that $\ee_1(u_n)/\ee_2(u_n)\to+\infty$ as $n\to\infty$.

To conclude the proof, let us now assume that $(u_n)\subset\sob$ is chosen arbitrarily so that $\|u_n\|\to \infty$. Without loss of generality we can assume that $\|u_n\|>1$ for all $n$. This time relation \eqref{above} yields
$$\frac{\ee_1(u_n)}{\ee_2(u_n)}\geq\frac{C_2\|u_n\|^{p_2^-}}{C_1(\|u_n\|^{r}+\|u_n\|^{s})}
-C_4.$$
By \eqref{2} we have $r\leq s<p_2^-$. Since $\|u_n\|\to \infty$, we obtain $$\lim_{n\to\infty}\frac{\ee_1(u_n)}{\ee_2(u_n)}=+\infty,$$ which concludes the proof of Lemma \ref{lema1}.
\end{proof}

For the following result we refer to \cite[Lemma 4.3]{kim}. This property establishes that $\Phi$ is  lower semicontinuous with respect to the weak topology of $\sob$.

\begin{lemma}\label{l4.3kim}
Assume that hypotheses (H1)--(H3) and \eqref{3} are fulfilled. Then $\Phi$ is weakly lower semicontinuous, that is, $u_n\rightharpoonup u$ in $\sob$ as $n\to\infty$ implies that $\Phi(u)\leq\liminf_{n\to\infty}\Phi(u_n)$.
\end{lemma}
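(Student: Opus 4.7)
\begin{proof-sketch}
The plan is to reduce the lemma to Mazur's theorem: a convex functional that is strongly lower semicontinuous on a Banach space is automatically weakly lower semicontinuous. Accordingly, I would split the argument into two parts: first establish convexity of $\Phi$ on $\sob$, and then establish strong continuity.

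For convexity, the key observation is that the integrand $\xi\mapsto \Phi_0(x,|\xi|)$ is convex on $\RR^N$ for almost every $x\in\Omega$. Indeed, differentiating $\Phi_0(x,t)=\int_0^t\phi(x,s)s\,ds$ gives $\partial_t\Phi_0(x,t)=\phi(x,t)t$ and $\partial_{tt}\Phi_0(x,t)=\phi(x,t)+t\,\partial_t\phi(x,t)\geq c\,t^{p_1(x)-2}>0$ for $t>0$, by hypothesis (H3). Hence $\Phi_0(x,\cdot)$ is convex on $[0,\infty)$, and it is non-decreasing since $\phi(x,t)t\geq c\,t^{p_1(x)-1}\geq 0$. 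The classical fact that a convex non-decreasing function composed with a convex function is convex, applied to $\Phi_0(x,\cdot)$ and the Euclidean norm on $\RR^N$, yields convexity of $\xi\mapsto\Phi_0(x,|\xi|)$. Integrating over $\Omega$ preserves this, so $\Phi$ is convex on $\sob$.

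For strong continuity, the growth bound $|\phi(x,|v|)v|\leq a_1(x)+b|v|^{p_1(x)-1}$ from (H2) gives $\Phi_0(x,t)\leq a_1(x)t+\frac{b}{p_1^-}\,t^{p_1(x)}$. By \eqref{2} and the boundedness of $\Omega$, we have the continuous embedding $\sob\hookrightarrow W_0^{1,p_1(x)}(\Omega)$, so $\Phi$ is well defined and bounded on bounded subsets of $\sob$. For a sequence $u_n\to u$ in $\sob$, one has $Du_n\to Du$ in $[L^{p_2(x)}(\Omega)]^N$ and hence in $[L^{p_1(x)}(\Omega)]^N$. Applying Proposition~\ref{prop2.2} to the Nemytsky operator associated with $\Phi_0$ (whose growth is compatible with the indicated exponents) yields $\Phi_0(\cdot,|Du_n|)\to \Phi_0(\cdot,|Du|)$ in $L^1(\Omega)$, so $\Phi(u_n)\to\Phi(u)$. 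Combining convexity with strong continuity, Mazur's theorem delivers weak lower semicontinuity.

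The main obstacle, as I see it, is the strong-continuity step rather than the convexity step: the natural domain of definition of $\Phi$ corresponds to the exponent $p_1(x)$, whereas the ambient topology is that of the larger space $\sob$. The passage between the two is mediated by the inclusion $L^{p_2(x)}\hookrightarrow L^{p_1(x)}$ on the bounded domain $\Omega$ together with modular/norm compatibility from Proposition~\ref{prop2.4}, and the subcritical condition \eqref{3} serves as a safety net to ensure that all intermediate embeddings have the needed compactness properties one usually invokes in such arguments.
\end{proof-sketch}
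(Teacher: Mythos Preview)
Your argument is sound. The paper does not actually prove this lemma: immediately before the statement it writes ``For the following result we refer to \cite[Lemma 4.3]{kim},'' and no proof is supplied. Your self-contained route via Mazur's lemma --- establishing that $\xi\mapsto\Phi_0(x,|\xi|)$ is convex (using the second inequality in (H3) to get $\partial_{tt}\Phi_0(x,t)\geq c\,t^{p_1(x)-2}>0$, together with monotonicity) and that $\Phi$ is strongly continuous on $\sob$ (from the growth bound in (H2) and the embedding $W_0^{1,p_2(x)}(\Omega)\hookrightarrow W_0^{1,p_1(x)}(\Omega)$ coming from \eqref{2}) --- is the standard way such results are proved and is almost certainly what \cite{kim} does as well. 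One minor remark: the subcritical hypothesis \eqref{3} plays no role in your argument, and indeed none is needed for the weak lower semicontinuity of $\Phi$; its appearance in the statement seems to be inherited from the paper's blanket assumptions rather than being essential here, so you need not invoke it as a ``safety net.''
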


The next result establishes that the infimum of $\ee_1(u)/\ee_2(u)$ is attained in $\sob\setminus\{0\}$ by an element which is an eigenfunction of problem \eqref{1}.

\begin{lemma}\label{lema2}
The real number $\lambda^*$ is an eigenvalue of problem \eqref{1}.
\end{lemma}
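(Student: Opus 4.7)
The plan is to realize $\lambda^{*}$ as an attained minimum of the Rayleigh quotient $\mathcal{R}(u):=\ee_1(u)/\ee_2(u)$ on $\sob\setminus\{0\}$, and then to read off the Euler--Lagrange equation as the eigenvalue problem \eqref{1}.

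First, I would take a minimizing sequence $(u_n)\subset\sob\setminus\{0\}$ with $\mathcal{R}(u_n)\to\lambda^{*}$. By Lemma~\ref{lema1}, the map $\mathcal{R}$ blows up both as $\|u\|\to 0$ and as $\|u\|\to\infty$, so there exist constants $0<\delta<M$ (depending only on the sequence) with $\delta\leq\|u_n\|\leq M$ for $n$ large. In particular $(u_n)$ is bounded in $\sob$, so after passing to a subsequence $u_n\rightharpoonup u^{*}$ in $\sob$. Using hypothesis \eqref{3} together with the ordering in \eqref{2} (which implies $r,s,p_3(x)\leq s<p_2^{-}\leq p_2(x)\leq p_2^{*}(x)$), the embeddings $\sob\hookrightarrow L^{r}(\Omega)$, $L^{s}(\Omega)$ and $L^{p_3(x)}(\Omega)$ are compact; hence $u_n\to u^{*}$ in each of these spaces, and in particular $\ee_2(u_n)\to\ee_2(u^{*})$ and, via hypothesis (H2) together with Proposition~\ref{prop2.2}, $\Theta(u_n)\to\Theta(u^{*})$.

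Second, I would apply Lemma~\ref{l4.3kim} (used for both $\Phi$ and $\Psi$, with exponents $p_1$ and $p_2$) to obtain the weak lower semicontinuity bound
\[
\Phi(u^{*})+\Psi(u^{*})\leq\liminf_{n\to\infty}\bigl(\Phi(u_n)+\Psi(u_n)\bigr),
\]
and therefore $\ee_1(u^{*})\leq\liminf_{n\to\infty}\ee_1(u_n)$.

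The key obstacle is to show that the weak limit $u^{*}$ is not the zero function; without this, dividing by $\ee_2(u^{*})$ makes no sense. Suppose for contradiction that $u^{*}=0$. Then $\ee_2(u_n)\to 0$ and $\Theta(u_n)\to 0$. On the other hand, hypothesis (H3) together with Proposition~\ref{prop2.3} and the Poincar\'e-type inequality in $\sob$ give
\[
\Phi(u_n)+\Psi(u_n)\geq \frac{c}{p_1^{+}}\intom\bigl(|Du_n|^{p_1(x)}+|Du_n|^{p_2(x)}\bigr)dx\geq c_0\min\bigl(\|u_n\|^{p_2^{-}},\|u_n\|^{p_2^{+}}\bigr)\geq c_0\delta',
\]
for some $c_0,\delta'>0$, using $\|u_n\|\geq\delta$. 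Since $\Theta(u_n)/\ee_2(u_n)$ stays bounded (by (H2), (\ref{2}) and the estimate $\int|u_n|^{p_3(x)}\leq C\ee_2(u_n)$ used in the proof of Lemma~\ref{lema1}), the ratio $(\Phi(u_n)+\Psi(u_n))/\ee_2(u_n)\to+\infty$ forces $\mathcal{R}(u_n)\to+\infty$, contradicting $\mathcal{R}(u_n)\to\lambda^{*}\in\RR$. Hence $u^{*}\neq 0$, so $\ee_2(u^{*})>0$, and passing to the limit in the Rayleigh quotient gives
\[
\lambda^{*}\leq\mathcal{R}(u^{*})=\frac{\ee_1(u^{*})}{\ee_2(u^{*})}\leq\frac{\liminf_n\ee_1(u_n)}{\lim_n\ee_2(u_n)}=\liminf_n\mathcal{R}(u_n)=\lambda^{*},
\]
so $u^{*}$ achieves the infimum.

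Finally, I would set $F(u):=\ee_1(u)-\lambda^{*}\ee_2(u)$. By definition of $\lambda^{*}$, $F(u)\geq 0$ on $\sob$ with $F(u^{*})=0$, so $u^{*}\in\sob\setminus\{0\}$ is a global minimizer of the $C^{1}$ functional $F$. Hence $F'(u^{*})=0$, and using Proposition~\ref{l3.2kim} (together with its analogues for $\Psi$ and $\Theta$) the identity $\langle F'(u^{*}),v\rangle=0$ for every $v\in\sob$ is precisely the weak formulation in Definition~\ref{defi1} with $\lambda=\lambda^{*}$, completing the proof.
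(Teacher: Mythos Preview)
Your proof is correct and follows essentially the same route as the paper: take a minimizing sequence, use Lemma~\ref{lema1} to bound it, pass to a weak limit, invoke weak lower semicontinuity of $\Phi+\Psi$ and compactness for $\Theta$ and $\ee_2$, rule out $u^*=0$ by contradiction, and read off the Euler--Lagrange equation. Two cosmetic differences: (a) you extract a uniform lower bound $\|u_n\|\geq\delta$ directly from Lemma~\ref{lema1} and use it in the $u^*\neq 0$ step, whereas the paper instead shows $\ee_0(u_n)\to 0$, deduces $\|u_n\|\to 0$ from (H3), and then invokes Lemma~\ref{lema1}; (b) for the Euler--Lagrange equation you observe that $F:=\ee_1-\lambda^*\ee_2$ is nonnegative with $F(u^*)=0$, while the paper differentiates the quotient $t\mapsto\ee_1(u^*+tw)/\ee_2(u^*+tw)$ at $t=0$. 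Both variants are equivalent.

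Two very minor misattributions worth cleaning up: the compact embeddings $\sob\hookrightarrow L^r,\,L^s,\,L^{p_3(\cdot)}$ follow from \eqref{2} alone (since $r\leq p_3(x)\leq s<p_2^-\leq p_2(x)<p_2^*(x)$), not from \eqref{3}; and the pointwise bound $\int_\Omega|u|^{p_3(x)}dx\leq C\,\ee_2(u)$, while true (split according to $|u|\leq 1$ or $|u|\geq 1$ and use $r\leq p_3(x)\leq s$), is not actually the estimate used in the proof of Lemma~\ref{lema1}, which bounds $\int_\Omega|u|^{p_3(x)}dx$ via Sobolev embeddings in terms of $\|u\|^r+\|u\|^s$.
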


\begin{proof}
We first prove that there exists $\in\sob\setminus\{0\}$ such that
$$\lambda^*=\frac{\ee_1(u)}{\ee_2(u)}\,.$$
Let $(u_n)\sob\setminus\{0\}$ be a minimizing sequence of $\ee_1/\ee_2$. By Lemma \ref{lema1}, this sequence is bounded. Next, using the reflexivity of $\sob$, we can assume (up to a subsequence) that
$$u_n\rightharpoonup u\quad\mbox{in}\ \sob.$$
By the lower semicontinuity of $\ee_0$ we obtain
\bb\label{sc1}\ee_0(u)\leq\liminf_{n\to\infty}\ee_0(u_n).\bbb

Next, using hypotheses \eqref{2} and \eqref{3}, we deduce that the function space $\sob$ is continuously embedded into $L^{p_3(x)}(\Omega)$. It follows that we can assume that
$$u_n\ri u\quad\mbox{in}\ L^{p_3(x)}(\Omega).$$
Therefore
\bb\label{sc2}\Theta(u_n)\ri\Theta(u)\quad\mbox{as}\ n\ri\infty.\bbb

Using again compact embeddings of $\sob$ into $L^r(\Omega)$ and $L^s(\Omega)$ we deduce that
\bb\label{sc3} \ee_2(u_n)\ri\ee_2(u)\quad\mbox{as}\ n\ri\infty. \bbb

We now claim that
\bb\label{sc4}u\not=0.  \bbb
Arguing by contradiction, we assume that $u=0$. The previous comments show that
$$\Theta(u_n)\to 0\ \mbox{and}\ \ee_2(u_n)\to 0\quad\mbox{as}\ n\to\infty.$$
Thus, if $\ep>0$ is sufficiently small then for all $n$ large enough we have
$$|\ee_1(u_n)-\lambda^*\ee_2(u_n)|<\ep\ee_2(u_n).$$
This implies $\ee_1(u_n)\to 0$, hence $\ee_0(u_n)\to 0$ as $n\to\infty$. Using (H3) we deduce that
$$u_n\to 0\quad\mbox{in}\ \sob.$$
Thus, by Lemma \ref{lema1}, we deduce that
$$\frac{\ee_1(u_n)}{\ee_2(u_n)}\ri+\infty\quad\mbox{as}\ n\to\infty,$$
hence $\ee_1(u)/\ee_2(u)=+\infty$. This contradiction implies our claim \eqref{sc4}.

Combining relations \eqref{sc1}, \eqref{sc2}, \eqref{sc3}, and \eqref{sc4}, we deduce that
\bb\label{baba}
\frac{\ee_1(u)}{\ee_2(u)}=\inf_{v\in\sob\setminus\{0\}}\frac{\ee_1(v)}{\ee_2(v)}=:\lambda^*.\bbb

Next, we argue that $u$ is an eigenfunction of problem \eqref{1} corresponding to $\lambda=\lambda^*$. For this purpose, we fix $w\in\sob\setminus\{0\}$ and consider the function $h:\RR\to\RR$ defined by
$$h(t)=\frac{\ee_1(u+tw)}{\ee_2(u+tw)}\,.$$
By \eqref{baba} we obtain $h'(0)=0$, hence
\bb\label{baba1}\ee_2(u)\ee_1'(u)(w)-\ee_1(u)\ee_2'(u)(w)=0.\bbb
Recall that $\ee_1(u)=\lambda^*\ee_2(u)$. Thus, relation \eqref{baba1} yields
$$\ee_1'(u)(w)-\lambda^*\ee_2'(u)(w)=0\quad\mbox{for all}\ w\in\sob,$$
hence $u$ is a solution of problem \eqref{1} for $\lambda=\lambda^*$.
\end{proof}

We prove in what follows that every $\lambda>\lambda^*$ is an eigenvalue of problem \eqref{1}, which shows the existence of an unbounded interval of eigenvalues.

\begin{lemma}\label{lema3}
Problem \eqref{1} has a solution for all $\lambda>\lambda^*$.
\end{lemma}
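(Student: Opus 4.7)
The plan is to apply the direct method of the calculus of variations to the energy functional $\ee$ associated to \eqref{1}, and to exclude the trivial minimizer by comparing its value with that at the eigenfunction produced in Lemma \ref{lema2}. Since the G\^ateaux derivative of $\ee$ agrees with the left-hand side of the weak formulation in Definition \ref{defi1} (via Proposition \ref{l3.2kim} for the gradient terms, applied on the smaller space $\sob$ using \eqref{2}--\eqref{3}, and routine computations for the lower-order terms), any nontrivial critical point of $\ee$ is automatically a solution of \eqref{1}.

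The first step is coercivity of $\ee$ on $\sob$. Combining the lower bound $\Phi(u)+\Psi(u)\geq C\int_\Omega(|Du|^{p_1(x)}+|Du|^{p_2(x)})\,dx$ coming from (H3) with the estimate $\Theta(u)\geq -C\|w\|_\infty(\|u\|^r+\|u\|^s)$ obtained exactly as in the proof of Lemma \ref{lema1}, and using Proposition \ref{prop2.3} together with the embedding $\sob\hookrightarrow L^{p_3(\cdot)}(\Omega)$ guaranteed by \eqref{2} and \eqref{3}, one obtains
\[\ee(u)\geq C_1\|u\|^{p_2^-}-(C_2+\lambda C_3)(\|u\|^r+\|u\|^s)\]
for $\|u\|\geq 1$. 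Since $r\leq s<p_2^-$ by \eqref{2}, this gives $\ee(u)\to+\infty$ as $\|u\|\to\infty$.

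The second step is weak lower semicontinuity. Lemma \ref{l4.3kim} applied to both $\phi$ and $\psi$ yields weak lower semicontinuity of $\Phi$ and $\Psi$ on $\sob$. The subcriticality \eqref{3} together with \eqref{2} gives compact embeddings of $\sob$ into $L^{p_3(\cdot)}(\Omega)$, $L^r(\Omega)$, and $L^s(\Omega)$, so by Proposition \ref{prop2.2}, hypothesis (H2), and $w\in L^\infty(\Omega)$, the functionals $\Theta$ and $\ee_2$ are sequentially weakly continuous. Hence $\ee$ is weakly lower semicontinuous, and being coercive on the reflexive Banach space $\sob$, it attains its infimum at some $u^\star\in\sob$. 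The third step is to rule out $u^\star=0$: let $u_0\in\sob\setminus\{0\}$ be the eigenfunction from Lemma \ref{lema2}, so that $\ee_1(u_0)=\lambda^*\ee_2(u_0)$ with $\ee_2(u_0)>0$. Then
\[\ee(u_0)=\ee_1(u_0)-\lambda\ee_2(u_0)=(\lambda^*-\lambda)\,\ee_2(u_0)<0=\ee(0),\]
so the minimizer satisfies $u^\star\neq 0$, and the Euler--Lagrange equation $\langle\ee'(u^\star),v\rangle=0$ for every $v\in\sob$ is precisely the weak formulation in Definition \ref{defi1}, confirming that $\lambda$ is an eigenvalue.

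The principal technical point I expect is the weak continuity of the non-gradient contribution $\Theta$ in the presence of the indefinite weight $w$; but the growth bound in (H2) and the compact embedding $\sob\hookrightarrow L^{p_3(\cdot)}(\Omega)$ reduce this to an application of Proposition \ref{prop2.2}, sidestepping the sign issue entirely. Everything else is a standard deployment of the direct method, with the strict comparison $\ee(u_0)<0$ serving as the crucial mechanism that propagates the existence of an eigenfunction from $\lambda^*$ to every larger value of the parameter.
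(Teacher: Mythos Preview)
Your proof is correct and follows essentially the same approach as the paper: the direct method applied to $\ee=\ee_1-\lambda\ee_2$, with coercivity coming from the estimates of Lemma~\ref{lema1}, weak lower semicontinuity from Lemma~\ref{l4.3kim} plus compact embeddings, and nontriviality of the minimizer from $\ee(u_0)=(\lambda^*-\lambda)\ee_2(u_0)<0$. The paper's version is more compressed (it simply invokes ``the definition of $\lambda^*$'' rather than the eigenfunction from Lemma~\ref{lema2} explicitly), but the logic is identical.
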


\begin{proof}
Fix $\lambda>\lambda^*$ and consider the map $\nn:\sob\to\RR$ defined by
$$\nn(v)=\ee_1(v)-\lambda\ee_2(v).$$
Then $v$ is a solution of problem \eqref{1} if and only if $v$ is a critical point of $\nn$.

With the same arguments as in the proof of Lemma \ref{lema1} we deduce that $\nn$ is coercive, so it has a global minimum. Since $\nn$ is also lower semicontinuous, we deduce that the global minimum of $\nn$ is attained at some $v\in\sob$.

To conclude the proof it remains to show that $v\not=0$. Using the definition of $\lambda^*$ we deduce that  $\nn(v)<0$, hence $$\inf_{w\in\sob}\nn(w)<0,$$
which implies that $v\not=0$.
\end{proof}

Next, we conclude the proof of Theorem \ref{t1} by proving that the second Rayleigh quotient (which generates the value $\lambda_*$) is not associated with the existence of eigenvalues for problem \eqref{1}.

\begin{lemma}\label{lema4}
Problem \eqref{1} does not have any solution for all $\lambda<\lambda_*$.
\end{lemma}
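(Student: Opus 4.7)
The plan is to proceed by contradiction, exploiting the definition of $\lambda_*$ together with the weak formulation of problem \eqref{1} tested against the eigenfunction itself.

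Suppose, for contradiction, that there exists $\lambda<\lambda_*$ which is an eigenvalue of \eqref{1}, with corresponding eigenfunction $u\in\sob\setminus\{0\}$. The first step is to choose $v=u$ as a test function in Definition \ref{defi1}. This is admissible since $u\in\sob$, and it collapses the weak formulation into the scalar identity
$$\intom\bigl[\phi(x,|Du|)+\psi(x,|Du|)\bigr]|Du|^{2}\,dx+\intom w(x)\theta(x,|u|)u^{2}\,dx=\lambda\intom\bigl(|u|^{r}+|u|^{s}\bigr)dx.$$

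The second step is to observe that the right-hand denominator $\intom(|u|^{r}+|u|^{s})\,dx$ is strictly positive: since $u\not\equiv 0$ and $r,s>1$, at least one of the integrals is nonzero. We may therefore divide and obtain
$$\lambda=\frac{\intom\bigl[\phi(x,|Du|)+\psi(x,|Du|)\bigr]|Du|^{2}\,dx+\intom w(x)\theta(x,|u|)u^{2}\,dx}{\intom\bigl(|u|^{r}+|u|^{s}\bigr)dx}.$$
The right-hand side is, by construction, the Rayleigh quotient whose infimum over $\sob\setminus\{0\}$ defines $\lambda_*$. Hence $\lambda\geq\lambda_*$, contradicting the assumption $\lambda<\lambda_*$. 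This will complete the proof of Lemma \ref{lema4} and, together with Lemmas \ref{lema2} and \ref{lema3}, the proof of Theorem \ref{t1}.

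There is essentially no analytic obstacle in this argument; the entire content is contained in the fact that testing with the eigenfunction itself produces exactly the numerator of the second Rayleigh quotient. The only point that deserves a brief mention is the positivity of the denominator, which is immediate from $u\neq 0$ and the fact that $r,s$ are positive real exponents, so that $|u|^{r}+|u|^{s}>0$ on a set of positive measure. No semicontinuity, compactness, or monotonicity properties are needed here, in contrast with the proofs of Lemmas \ref{lema2} and \ref{lema3}.
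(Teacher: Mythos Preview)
Your proof is correct and follows essentially the same approach as the paper: assume an eigenvalue $\lambda<\lambda_*$ exists, test the weak formulation with $v=u$, and read off that $\lambda$ equals the second Rayleigh quotient at $u$, hence $\lambda\geq\lambda_*$, a contradiction. The only difference is that you make the positivity of the denominator explicit, which the paper leaves implicit.
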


\begin{proof}
Fix arbitrarily $\lambda<\lambda_*$ and assume by contradiction that $\lambda$ is an eigenvalue for problem \eqref{1}. Thus, there exists $u\in\sob\setminus\{0\}$ such that
$$\begin{array}{ll}
&\di\intom \left((\phi(x,|D  u|)+(\psi(x,|D  u|) \right)DuDvdx+\intom w(x)\theta(x,|  u|)uvdx=\\ &\di
\lambda\intom (|u|^{r-2}+|u|^{s-2})uvdx.\end{array}$$
for all $v\in\sob$.

Taking $v=u$, we obtain that
$$\lambda=\frac{\di \intom (\phi(x,|Du|)+\psi(x,|Du|))|Du|^2dx+\intom w(x)\theta (x,|u|)u^2dx}{\di\intom (|u|^r+|u|^s)dx}\,.$$
This yields a contradiction, since $\lambda<\lambda_*$.
\end{proof}

The proof of Theorem \ref{t1} is now complete.
\qed

\section{Optimization of $\lambda^*$ for bounded closed subsets of $L^\infty(\Omega)$}\label{sect5}
In this section we give the proof of Theorem \ref{t2}. We will denote by $\ee_1^w$ and $\Theta^w$ the energies corresponding to the weight $w$.

Let $\WW$ be a nonempty, bounded, closed subset of $L^\infty(\Omega)$. Let $(w_n)\subset\WW$ be a minimizing sequence of $\inf_{w\in \WW}\lambda^*(w)$, that is,
 \bb\label{start}\inf_{w\in \WW}\lambda^*(w)=\lim_{n\to\infty}\lambda^*(w_n).\bbb
 Since $\WW$ is a bounded set, it follows that the sequence $(w_n)$ is bounded. Thus, by the Banach-Alaoglu-Bourbaki theorem (see Brezis \cite[Theorem 3.16]{hbspringer}) it follows that, up to a subsequence, $w_n\to w_0\in\WW$ in the weak$^*$ topology $\sigma(L^\infty,L^1)$.

 We claim that
\bb\label{wmin} \lambda^*(w_0)=\lim_{n\to\infty}\lambda^*(w_n).\bbb

By Theorem \ref{t1}, there exists $u_n\in\sob\setminus\{0\}$ such that
\bb\label{ab0}\frac{\ee_1^{w_n}(u_n)}{\ee_2(u_n)}=\lambda^*(w_n)\quad\mbox{for all}\ n\geq 1.\bbb
On the other hand, returning to relation \eqref{above}, we have
\bb\label{ab1}\frac{\ee_1^{w_n}(u_n)}{\ee_2(u)}\geq C_0\,\frac{\di \intom (|Du_n|^{p_1(x)}+|Du_n|^{p_2(x)})dx}{\|u_n\|^{r}+\|u_n\|^{s}}-C_4.\bbb
Combining \eqref{ab0} and \eqref{ab1} we deduce that $(u_n)$ is bounded in $\sob$.
Moreover, by Lemma \ref{lema1}, the sequence $(u_n)$ does not contain a subsequence converging to zero. Thus, by reflexivity, there exists $u_0\in\sob\setminus\{0\}$ such that
\bb\label{crai1}u_n\rightharpoonup u_0\quad\mbox{in}\ \sob\bbb
and
\bb\label{crai2}u_n\to u_0\quad\mbox{in}\ L^{p_3(x)}(\Omega).\bbb

By hypothesis (H2), we observe that
$$\begin{array}{ll} |\Theta^{w_0}(u_n)-\Theta^{w_0}(u_0)|&\di =
\left|\intom w_0(x)\int_{|u_0(x)|}^{|u_n(x)|}s\theta(x,s)dsdx\right|\\
&\di\leq \frac{b}{p_3^+} \intom \left(\left|\, |u_n|^{p_3(x)}-|u_0|^{p_3(x)} \right|\right)dx.\end{array}$$
Using now \eqref{crai2}, we obtain
\bb\label{cara1}\Theta^{w_0}(u_n)\to\Theta^{w_0}(u_0)\quad\mbox{as}\ n\to\infty.\bbb

With a similar argument and using the fact that $(w_n)\subset\WW$ and $\WW$ is a bounded set, we deduce that
\bb\label{cara2}\Theta^{w_n}(u_n)\to\Theta^{w_n}(u_0)\quad\mbox{as}\ n\to\infty.\bbb

We now recall that
$w_n\to w_0\in\WW$ in the weak$^*$ topology $\sigma(L^\infty,L^1)$. Since the mapping
$$\Omega\ni x\mapsto \Theta_0^{w_0}(x,|u_0(x)|)$$
is in $L^1(\Omega)$, then Proposition 3.13(iv) in \cite{hbspringer} shows that
\bb\label{cara3}\Theta^{w_n}(u_0)\to\Theta^{w_0}(u_0)\quad\mbox{as}\ n\to\infty.\bbb

Combining relations \eqref{cara1}--\eqref{cara3} we deduce that
\bb\label{cara0}\lim_{n\to\infty}\left[\Theta^{w_n}(u_n)-\Theta^{w_0}(u_n) \right]=0.\bbb

Next, using the definition of $\lambda^*(w_0)$ we have
$$\lambda^*(w_0)\leq\frac{\ee_1^{w_0}(u_n)}{\ee_2(u_n)}\,.$$
Thus, by \eqref{ab0},
\bb\label{spre}\begin{array}{ll}\lambda^*(w_0)&\di\leq\frac{\ee_1^{w_n}(u_n)}{\ee_2(u_n)}+\frac{\ee_1^{w_0}(u_n)
-\ee_1^{w_n}(u_n)}{\ee_2(u_n)}\\
&\di= \lambda^*(w_n)+\frac{\ee_1^{w_0}(u_n)
-\ee_1^{w_n}(u_n)}{\ee_2(u_n)}\\
&\di= \lambda^*(w_n)+\frac{\Theta^{w_0}(u_n)
-\Theta^{w_n}(u_n)}{\ee_2(u_n)}\,.\end{array}\bbb
Relations \eqref{cara0} and \eqref{spre} yield
$$\lambda^*(w_0)\leq\lim_{n\to\infty}\lambda^*(w_n).$$
Combining this relation with \eqref{start} we deduce that
$$\lambda^*(w_0)\leq\inf_{w\in \WW}\lambda^*(w).$$
Since $w_0\in\WW$, this relation shows that, in fact, we have equality, hence our claim \eqref{wmin} is argued. \qed

\subsection*{Concluding remarks and perspectives}
(i) The differential operator that describes problem \eqref{1} falls in the realm of those related to the so-called Musielak-Orlicz spaces (see \cite{19a, orli}), more in general, of the operators having non-standard growth conditions (which are widely considered in the calculus of variations). These function spaces are Orlicz spaces whose defining Young function exhibits an additional dependence on the $x$ variable. Indeed, classical Orlicz spaces $L^\Phi$ are defined requiring that a member function $f$ satisfies
$$\int_\Omega \Phi(|f|)dx<\infty,$$
where $\Phi(t)$ is a Young function (convex, non-decreasing, $\Phi(0)=0$). In the new case of Musielak-Orlicz spaces, the above condition becomes
$$\int_\Omega \Phi(x,|f|)dx<\infty.$$

In a particular case, the problems considered in this paper are driven by the function
$$\Phi(x,|\xi|):=\left\{
\begin{array}{lll}
& |\xi|^{p_1(x)}&\quad\mbox{if}\ |\xi|\leq 1\\
& |\xi|^{p_2(x)}&\quad\mbox{if}\ |\xi|\geq 1.
\end{array}\right.
$$

(ii) According to the recent papers by Baroni, Colombo and Mingione \cite{mingi1,mingi2,mingi3} dealing with double phase energy functional associated to $(p,q)$-operators,
we consider that an interesting field of research concerns nonhomogeneous problems of the type
\bb\label{iint1}\intom \left[\Phi_0(x,|Du(x)|)+a(x)\Psi_0(x,|Du(x)|)\right]dx\bbb
or
\bb\label{iint2}\intom \left[\Phi_0(x,|Du(x)|)+a(x)\Psi_0(x,|Du(x)|)\log(e+|x|)\right]dx,\bbb
where $a(x)$ is a nonnegative potential and the assumption \eqref{2} holds.

In such a case, the variable
potential controls the geometry of a composite of two materials described by $\Phi_0$ and $\Psi_0$, hence by $\phi$ and $\psi$. In the region $[x:\ a(x)>0]$ then the material described by $\psi$ is present, otherwise the material described by $\phi$ is the only one that creates the composite.

We also point out
that  since  the  integral  energy  functionals  defined  in \eqref{iint1} and \eqref{iint2} have  a  degenerate  behavior  on  the
zero  set  of  the  gradient,  it  is  natural  to  study  what  happens  if  the  integrand  is  modified  in such a way that, also if $|Du|$ is small, there exists an imbalance between the two terms of every
integrand.

In a related framework, we refer to our recent paper Bahrouni, R\u{a}dulescu, and Repov\v{s} \cite{nonln}, where it is studied a class of integral functions with variable exponent and vanishing weight and it is established a Caffarelli-Kohn-Nirenberg inequality in this {\it degenerate} setting.

(iii)
The problem studied in the present paper corresponds to a {\it subcritical} setting, which is described in hypothesis \eqref{3}. This assumption has been crucial in our arguments in order to deal with strong convergence in suitable Lebesgue spaces with variable exponent. Due to the particular setting existing in this paper, we suggest the study of  ``almost critical" abstract framework, which assumes to replace hypothesis \eqref{3} with
$$\min\{ p_1^*(x)-p_2(x);\ x\in\overline\Omega\}=0.$$
In particular, this assumption holds
if there exists $x_0\in\Omega$ such that $$p_2(x_0)=p_1^*(x_0)\ \mbox{and}\ p_2(x)< p_1^*(x)\ \mbox{for all
$x\in\overline\Omega\setminus\{x_0\}$}.$$ We do not have any response to this new setting, even for particular cases (for instance, if $\Omega$ is a ball).

\medskip
{\bf Acknowledgements.} The authors thank an anonymous referee for the careful analysis of this paper and for suggesting several improvements with respect to the initial version of this work.
This research was supported by the Slovenian Research Agency grants
P1-0292, J1-8131, J1-7025, N1-0064, and N1-0083. V.D.~R\u adulescu acknowledges the support through a grant of the Romanian Ministry of Research and Innovation, CNCS--UEFISCDI, project number PN-III-P4-ID-PCE-2016-0130,
within PNCDI III.

\end{document}